\newtheorem{theorem}{Theorem}
\newtheorem{definition}{Definition}
\newtheorem{lemma}{Lemma}
\newtheorem{proposition}{Proposition}
\newtheorem{remark}{ Remark}
\begin{document}
\title{Decay of mass for a semilinear heat equation on Heisenberg group}

\author{Ahmad Z. Fino \footnote{\noindent 
College of Engineering and Technology, American University of the Middle East, Kuwait; ahmad.fino@aum.edu.kw.
 }}

\date{}
\maketitle

\begin{abstract}
In this paper, we are concerned with the Cauchy problem for the reaction-diffusion equation with time-dependent absorption
$u_{t}-\Delta_{\mathbb{H}}u=- k(t)u^p$ posed on $\mathbb{H}^n$, driven by the Heisenberg Laplacian and supplemented with a nonnegative integrable initial data, where $p>1$, $n\geq 1$, and $k:(0,\infty)\to(0,\infty)$ is a locally integrable function. We study the large time behavior of non-negative solutions and show that the nonlinear term determines the large time asymptotic for
$p\leq 1+2/Q,$ while the classical/anomalous diffusion effects win if $p>1+{2}/{Q}$, where $Q=2n+2$ is the homogeneous dimension of $\mathbb{H}^n$.
\end{abstract}

\noindent {\small {\bf MSC[2020]:} 35K57, 35B40, 35R03, 35A01, 35B33} 

\noindent {\small {\bf Keywords:} Large time behavior of solutions, semilinear parabolic equations, Heisenberg group, mass, critical exponent}

\section{Introduction}
In this work, we investigate the asymptotic behavior of solutions to the Cauchy problem for a nonlinear reaction-diffusion equation with time-dependent absorption, posed on the Heisenberg group
\begin{equation}\label{1}
\begin{array}{ll}
u_t-\Delta_{\mathbb{H}}u=- k(t)u^p,&\qquad {\eta\in \mathbb{H}^n,\,\,\,t>0,}
 \\{}\\ u(0,\eta)=u_{0}(\eta)\geq 0, &\qquad \eta\in \mathbb{H}^n,
 \end{array}
\end{equation}
where $n\geq1$, $p>1$, $k:(0,\infty)\to(0,\infty)$,  $k\in L^1_{loc}(0,\infty)$, $u_0\in L^1(\mathbb{H}^n)\cap C_0(\mathbb{H}^n)$,  $\Delta_{\mathbb{H}}$ is the Heisenberg Laplacian, and  $C_0(\mathbb{H}^n)$ denotes the space of all continuous functions tending to zero at infinity. The global existence and positivity of mild/classical solutions are obtained using the standard procedure via the comparison principle, combined with the fixed point theorem and regularity arguments.


\subsection{Historical background}
The study of mass decay in nonlinear heat equations is essential for understanding how diffusion and nonlinear absorption interact to influence the long-time dynamics of solutions. In the classical linear case,
$$\partial_t u=\Delta u, \qquad  u(0)=u_0\quad \text{in }\,\, \mathbb{R}^N,$$
the total mass is conserved for all \( t > 0 \), i.e., $\displaystyle \int_{\mathbb{R}^N} u(x, t) \, dx = \int_{\mathbb{R}^N} u_0(x) \, dx$, provided \( u_0 \in L^1(\mathbb{R}^N) \). This is easily verified by the representation formula
\begin{equation}\label{intro:Gaussian}
u(t,x)=e^{t\Delta} u_0(x)=[G(t,\cdot)*u_0](x), \quad 
G(t,x)=(4\pi t)^{-\frac{N}{2}}e^{-\frac{|x|^2}{4t}},
\end{equation}
where $e^{t\Delta}$ is the heat semigroup on $\mathbb{R}^N$.  However, the introduction of nonlinear absorption alters this behavior significantly. In particular, for the semilinear equation
\begin{equation*}
\partial_t u = \Delta u - u^p,\qquad  u(0)=u_0\geq 0\quad \text{in }\,\, \mathbb{R}^N,
\end{equation*}
where $p > 1$, the total mass 
$$M_{\mathbb{R}}(t) = \int_{\mathbb{R}^N} u(x, t) \, dx,$$
 typically decreases over time, and may decay to zero as \( t \to \infty \), depending on the exponent \( p \) and the spatial dimension \( N \). The analysis aims to uncover how the presence of the semilinear absorption term $-u^p$ modifies the dynamics compared to the purely linear setting. In the work of Gmira and V\'eron~\cite{GmiraVeron1984}, it is observed that comparing two supersolutions, one governed by the linear diffusion $(t,x)\mapsto e^{t\Delta} u_0(x)$, and the other governed by the nonlinear absorption \( t \mapsto (p-1)^{-\frac{1}{p-1}} t^{-\frac{1}{p-1}} \), reveals a fundamental threshold at \( p = 1 + \frac{2}{N} \) for integrable initial data. This threshold aligns with the Fujita phenomenon, originally developed in~\cite{Fujita1966}, and further studied in a series of works such as Hayakawa~\cite{Hayakawa1973}, Sugitani~\cite{Sugitani1975}, Kobayashi-Sirao-Tanaka~\cite{KobayashiSiraoTanaka1977}, and the monograph by Quittner and Souplet~\cite{QS}. The large-time behavior of the mass $M_{\mathbb{R}}(t)$ is crucial in understanding the asymptotic dynamics of the solution \( u \). Specifically,
\begin{itemize}
    \item If \( 1 < p \leq 1 + \frac{2}{N} \), then the mass decays to zero, that is, \( \lim_{t \to \infty} M_{\mathbb{R}}(t) = 0 \).
    \item If \( p > 1 + \frac{2}{N} \), the mass does not vanish at $t\to \infty$
and the solution $u$ behaves like the linear heat flow:
    \begin{equation*}
    \lim_{t \to \infty} \sup_{|x| \leq c t^{1/2}} \left| t^{N/2} u(t,x) - M_\infty G(t,x) \right| = 0,
    \end{equation*}
   where $c$ is an arbitrary positive constant.
\end{itemize}
A related decay estimate was also obtained by Fino and Karch~\cite{FinoKarch2010}, who proved that
\begin{equation*}
\lim_{t \to \infty} t^{\frac{N}{2} (1 - \frac{1}{q})} \| u(t, \cdot) - M_\infty G(t, \cdot) \|_{L^q(\mathbb{R}^N)} = 0, \quad \text{for all } 1 \leq q < \infty.
\end{equation*}
They also generalised the above results to the fractional reaction-diffusion equation
 \begin{equation}\label{eq1}
\partial_t u = -(-\Delta)^{\alpha/2} u - u^p, \qquad  u(0)=u_0\geq 0\quad \text{in }\,\, \mathbb{R}^N,
\end{equation}
where $(-\Delta)^{\alpha/2}$ stands for the fractional Laplacian of order $0<\alpha\leq 2$. It has been shown that the critical exponent for the large time behavior of solutions of \eqref{eq1} is $p=1+\alpha/N$, that is, the mass remains strictly positive for \( p > 1 + \frac{\alpha}{N} \) and vanishes as \( t \to \infty \) for \( p \leq 1 + \frac{\alpha}{N} \). For results concerning the asymptotic behavior of solutions involving higher-order elliptic operators in divergence form, we refer the reader to~\cite{KiraneQaf}.

A complementary contribution is given by Jleli and Samet~\cite{Jleli}, who rigorously analyzed the behavior of the mass function for the following nonlinear fractional diffusion equation with time-dependent absorption
 \begin{equation}\label{eq}
\partial_t u+t^\sigma(-\Delta)^{\alpha/2}  u=-h(t)u^p,\qquad  u(0)=u_0\geq 0\quad \text{in }\,\, \mathbb{R}^N,
\end{equation}
where $\sigma\geq0$, $p>1$, $h:(0,\infty)\to(0,\infty)$,  $h\in L^1_{loc}(0,\infty)$, $u_0\in L^1(\mathbb{R}^N)\cap C_0(\mathbb{R}^N)$. Under the assumptions
\begin{equation*}
\int_0^1 h(s) \, ds < \infty \quad \text{and} \quad \int_1^\infty s^{-\frac{N}{\alpha}(p - 1)(\sigma+1)} h(s) \, ds < \infty,
\end{equation*}
they established that \( M_\infty > 0 \). In contrast, when \( 1 < p \leq 1 + \frac{\alpha}{N(\sigma + 1)} \) and \( \inf_{t \geq 0} h(t) > 0 \), they prove that the solution's mass decays to zero, i.e., \( M_\infty = 0 \). If furthermore  \( h \in L^\infty(0,\infty) \), their sharp characterization is summarized as follows:
    \begin{itemize}
        \item If \( p > 1 + \frac{\alpha}{N(\sigma+1)} \), then \( M_\infty \in (0, \infty) \),
        \item If \( 1 < p \leq 1 + \frac{\alpha}{N(\sigma+1)} \), then \( M_\infty = 0 \).
    \end{itemize}
  An additional contribution to the understanding of mass dynamics in nonlocal equations is provided by Robles and Morales~\cite{RuvalcabaVilla2017}, who investigated the nonautonomous fractional reaction-diffusion equation
\begin{equation*}
\partial_t u = k(t)(-\Delta)^{\alpha/2} u - h(t) \varphi(u),\qquad  u(0)=u_0\geq 0\quad \text{in }\,\, \mathbb{R}^N,
\end{equation*}
with \( 0 < \alpha < 2 \), where \( k, h: [0, \infty) \to [0, \infty) \) are continuous, and \( \varphi: \mathbb{R} \to [0, \infty) \) is a convex, differentiable function. Under appropriate assumptions on the parameters, the authors established that the total mass remains strictly positive for all finite times and converges to a positive constant as $t\rightarrow\infty$, provided that $h\in L^1(0,\infty)$. These results highlight the diminishing influence of the absorption term over time and its role in preserving the long-time mass of the solution.

Further advancements by Kirane, Fino, and Ayoub~\cite{KFA} focused on a time-weighted semilinear equation involving a mixed diffusion operator \( L = -\Delta + (-\Delta)^{\alpha/2} \) and time-dependent absorption
\begin{equation*}
\partial_t u + t^\beta L u = -h(t) u^p,\qquad  u(0)=u_0\geq 0\quad \text{in }\,\, \mathbb{R}^N,
\end{equation*}
where $\beta\geq0$, $p>1$, $h:(0,\infty)\to(0,\infty)$,  $h\in L^1_{loc}(0,\infty)$, $u_0\in L^1(\mathbb{R}^N)\cap C_0(\mathbb{R}^N)$. They established a generalized Fujita-type critical exponent \( p_c = 1 + \frac{\alpha}{N(\beta + 1)} \) and demonstrated that solutions exhibit mass decay for \( p \leq p_c \), while persistence occurs for \( p > p_c \).

Very recently, Fino and Sobajima~\cite{FinoSobajima} examined the decay of mass with respect to
an invariant measure for semilinear heat equations in exterior domains
\begin{equation*}
\partial_t u - \Delta u + u^p = 0,
\end{equation*}
posed in exterior domains \( \Omega \subset \mathbb{R}^N \)  with Dirichlet boundary condition on \( \partial \Omega \). A notable aspect of their analysis is the use of an invariant measure \( \varphi(x)\, dx \), where \( \varphi \) is a positive harmonic function satisfying Dirichlet boundary conditions on \( \partial \Omega \). They showed that the total mass with respect to this weighted measure vanishes as \( t \to \infty \) if and only if \( 1 < p \leq \min\{2, 1 + \frac{2}{N} \} \). In contrast, when \( p > \min\{2, 1 + \frac{2}{N} \} \), all solutions are asymptotically free. For \( N \geq 3 \), the large-time asymptotic profile is described by a Gaussian-modified function.


 \subsection{Motivation and Objectives}
Motivated by these results, we investigate the  Cauchy problem \eqref{1} on the Heisenberg group \( \mathbb{H}^n \). Our aim is to establish decay and persistence results analogous to those found in the Euclidean, but adapted to the subelliptic geometry of \( \mathbb{H}^n \). We derive critical exponent conditions under which the total mass either vanishes or remains positive as \( t \to \infty \), using semigroup estimates, scaling analysis, and suitable test function techniques tailored to the Heisenberg setting. 

Our results can be summarized as follows. A unique non-negative solution of the Cauchy problem \eqref{1} exists globally in time.  Hence, we  study the
decay properties of the mass 
$$ M_{\mathbb{H}}(t)= \int_{\mathbb{H}^n}u(t,\eta)\,d\eta,$$
 of the solution $u$ to problem \eqref{1}. We prove that $ \lim\limits_{t\to\infty}M_{\mathbb{H}}(t)=M_\infty >0$ for $p>1+{2}/{Q}$ (Theorem \ref{decay}), while $M_{\mathbb{H}}(t)$ tends to zero as $t\to\infty$ if $1<p\leq 1+2/Q$ (Theorem \ref{convto0}), where $Q=2n+2$ is the homogeneous dimension of $\mathbb{H}^n$. The proof of Theorem \ref{decay} is based on the $L^p-L^q$ estimates of solutions as well as the comparison principle, while for Theorem \ref{convto0}, our proof approach is based on the method of nonlinear capacity estimates or the so-called rescaled test function method. The nonlinear capacity method was introduced to prove the non-existence of global solutions by  Baras and Pierre \cite{Baras-Pierre}, then used by Baras and Kersner in \cite{Baras}; later on, it was developed by Zhang in \cite{Zhang} and Mitidieri and Pohozaev in \cite{19}.  It was also used by Kirane et al.  in \cite{Kirane-Guedda,KLT},  and Fino et al in \cite{Fino1,Fino3,Fino4,Fino5}.


\subsection{Main results}

\begin{definition}[Mild solution]${}$\\
Let $u_0\in C_0(\mathbb{H}^n)$, $k\in L^1_{loc}(\mathbb{R})$, $n\geq1$, $p>1$, and $T>0$. We say that $u\in C([0,T),C_0(\mathbb{H}^n))$
is a mild solution of problem \eqref{1} if $u$ satisfies the following integral equation
\begin{equation}\label{IE}
    u(t,\eta)=S_{\mathbb{H}}(t) u_0(\eta)-\int_{0}^tS_{\mathbb{H}}(t-s)k(s) |u|^{p-1}u(s,\eta)\,ds,\quad \hbox{for all}\,\,\,\eta\in \mathbb{H}^n,\,t\in[0,T).
\end{equation}
More general, for all $t_0\geq 0$, we have
\begin{equation}\label{IEG}
    u(t,\eta)=S_{\mathbb{H}}(t-t_0) u(t_0,\eta)-\int_{t_0}^tS_{\mathbb{H}}(t-s)k(s) |u|^{p-1}u(s,\eta)\,ds,\quad \hbox{for all}\,\,\,\eta\in \mathbb{H}^n,\,t\in[t_0,T).
\end{equation}
If $u$ is a mild solution of \eqref{1} in $[0,T)$ for all $T>0$, then $u$ is called global-in-time mild solution of \eqref{1}.
\end{definition}
\begin{theorem}[Local existence]\label{Local}${}$\\
Given $u_0\in C_0(\mathbb{H}^n)$, $k\in L^1_{loc}(\mathbb{R})$, $n\geq1$, and $p>1,$ there exist a maximal
time $T_{\max}>0$ and a unique mild solution $u\in
C([0,T_{\max}),C_0(\mathbb{H}^n))$ to the problem \eqref{1}. Furthermore, either $T_{\max}=\infty$ or else $T_{\max}<\infty$ and $\|u(t)\|_{L^\infty(\mathbb{H}^n)}\rightarrow\infty$ as $t\rightarrow T_{\max}$. Moreover, if $u_0\in L^r(\mathbb{H}^n),$ for $1\leq r<\infty,$ then $u\in
C([0,T_{\max}),L^r(\mathbb{H}^n))$.
\end{theorem}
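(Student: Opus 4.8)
The plan is to prove Theorem~\ref{Local} by a contraction mapping argument for the Duhamel map attached to \eqref{IE}, exploiting the standard properties of the Heisenberg heat semigroup $S_{\mathbb{H}}(t)=e^{t\Delta_{\mathbb{H}}}$: it is a positivity-preserving, strongly continuous contraction semigroup on $C_0(\mathbb{H}^n)$ and on $L^r(\mathbb{H}^n)$ for every $1\le r<\infty$, given by convolution on the group with a nonnegative, $L^1$-normalized kernel, so that $\|S_{\mathbb{H}}(t)f\|_{L^q}\le\|f\|_{L^q}$ for $q\in\{r,\infty\}$ and $t\ge0$, and $S_{\mathbb{H}}(t)$ maps $C_0(\mathbb{H}^n)$ into $C_0(\mathbb{H}^n)$. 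For $T>0$ and $M>0$ I would set
\[
\Phi(u)(t)=S_{\mathbb{H}}(t)u_0-\int_0^tS_{\mathbb{H}}(t-s)\,k(s)\,|u(s)|^{p-1}u(s)\,ds
\]
on the closed ball $X_{T,M}=\{u\in C([0,T],C_0(\mathbb{H}^n)):\ \sup_{0\le t\le T}\|u(t)\|_{L^\infty}\le M\}$, equipped with the sup norm, and show $\Phi$ is a contraction of $X_{T,M}$ for $T$ small. First I would check that $\Phi$ is well defined and takes values in $C([0,T],C_0(\mathbb{H}^n))$: for fixed $s$ the function $|u(s)|^{p-1}u(s)$ lies in $C_0(\mathbb{H}^n)$, hence so does its image under $S_{\mathbb{H}}(t-s)$; strong continuity of $t\mapsto S_{\mathbb{H}}(t)u_0$ handles the linear part, and continuity in $t$ of the Duhamel term follows by splitting the integral, using $\|S_{\mathbb{H}}(t-s)g(s)\|_{L^\infty}\le\|g(s)\|_{L^\infty}$, strong continuity of $S_{\mathbb{H}}$, and dominated convergence with dominating function $s\mapsto M^p k(s)\in L^1(0,T)$; this is where the hypothesis $k\in L^1_{loc}$ enters.

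Next, with $M:=2\|u_0\|_{L^\infty}+1$, the relevant quantity is $\kappa(T):=\int_0^Tk(s)\,ds$, which tends to $0$ as $T\to0^+$ by absolute continuity of the Lebesgue integral. The contraction property of $S_{\mathbb{H}}$ gives $\|\Phi(u)(t)\|_{L^\infty}\le\|u_0\|_{L^\infty}+M^p\kappa(T)$, so $\Phi(X_{T,M})\subset X_{T,M}$ once $M^p\kappa(T)\le\|u_0\|_{L^\infty}+1$; and from the elementary inequality $\bigl||a|^{p-1}a-|b|^{p-1}b\bigr|\le p(|a|^{p-1}+|b|^{p-1})|a-b|$ one obtains, for $u,v\in X_{T,M}$,
\[
\sup_{0\le t\le T}\|\Phi(u)(t)-\Phi(v)(t)\|_{L^\infty}\le 2p\,M^{p-1}\,\kappa(T)\ \sup_{0\le t\le T}\|u(t)-v(t)\|_{L^\infty}.
\]
Choosing $T$ so small that $M^p\kappa(T)\le\|u_0\|_{L^\infty}+1$ and $2pM^{p-1}\kappa(T)\le 1/2$, Banach's fixed point theorem furnishes a unique mild solution on $[0,T]$; uniqueness on any common interval of existence then follows from the same Lipschitz estimate together with Gronwall's inequality.

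The passage to the maximal existence time $T_{\max}$ and the blow-up alternative is the usual continuation argument: from any $t_0\in[0,T_{\max})$ the same construction, applied with $u(t_0)$ in place of $u_0$ and based on \eqref{IEG}, produces a solution on $[t_0,t_0+\tau]$, where $\tau$ can be chosen to depend only on $\|u(t_0)\|_{L^\infty}$ and on $\int_{t_0}^{t_0+\tau}k(s)\,ds$; the latter can be made uniformly small for $t_0$ close to $T_{\max}$ by absolute continuity of the integral, so if $\limsup_{t\to T_{\max}^-}\|u(t)\|_{L^\infty}<\infty$ the solution could be extended past $T_{\max}$, contradicting maximality. Finally, for $u_0\in L^r(\mathbb{H}^n)$ with $1\le r<\infty$, I would rerun the fixed-point argument in $X_{T,M}\cap C([0,T],L^r(\mathbb{H}^n))$ with the norm $\sup_t(\|u(t)\|_{L^\infty}+\|u(t)\|_{L^r})$, using the $L^r$-contractivity of $S_{\mathbb{H}}$ together with the bound $\||u|^{p-1}u\|_{L^r}\le\|u\|_{L^\infty}^{p-1}\|u\|_{L^r}$ to close the estimates, and by the uniqueness already established the resulting function coincides with $u$, giving $u\in C([0,T_{\max}),L^r(\mathbb{H}^n))$. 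I do not anticipate a genuine obstacle; the only points demanding care are the use of $k\in L^1_{loc}$ rather than $k\in L^\infty$ (both in the continuity of the Duhamel term and in making the local existence time uniform near $T_{\max}$), and checking that the semigroup mapping properties quoted above for the Euclidean heat semigroup do hold verbatim for $S_{\mathbb{H}}$ on $\mathbb{H}^n$.
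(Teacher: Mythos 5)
Your proposal is correct and follows essentially the same route as the paper, which simply invokes the standard fixed point (contraction mapping) argument in $C_0(\mathbb{H}^n)$ for the Duhamel formulation, together with the usual continuation argument and the contractivity of $S_{\mathbb{H}}(t)$ on $L^r(\mathbb{H}^n)$ for the $L^r$-persistence; your treatment of $k\in L^1_{loc}$ via absolute continuity of $\int_0^T k$ is exactly the point that makes the standard scheme go through. (Only a cosmetic remark: to get the full blow-up alternative $\|u(t)\|_{L^\infty}\to\infty$ you should phrase the continuation step with $\liminf$ rather than $\limsup$, restarting from a sequence of times where the norm stays bounded.)
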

The local existence of the mild solution follows directly from the standard fixed point theorem in the Banach space $C_0(\mathbb{H}^n)$. The following comparison principle can be derived in a similar manner to \cite[Lemma~3.5.9]{CH}.
\begin{lemma}[The comparison principle]\label{Comparison}${}$\\
Let $u$ and $v$, respectively, be mild solutions of problem \eqref{1} with initial data $u_0$ and $v_0$, respectively. If $0\leq u_0\leq v_0$, and the function $k(t)$ does not change sign on \( (0, T_{\max}) \), then $0\leq u(x,t)\leq v(x,t)$ for almost every $\eta\in \mathbb{H}^n$ and for all $t\in [0,T_{\max})$.
\end{lemma}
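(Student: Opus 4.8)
The plan is to argue entirely at the level of the mild formulation \eqref{IE}, using that the Heisenberg heat semigroup $\{S_{\mathbb{H}}(t)\}_{t\ge 0}$ is convolution with the (positive, mass-one) heat kernel of $\Delta_{\mathbb{H}}$ and is therefore order-preserving and an $L^\infty$-contraction (indeed an $L^q$-contraction for every $q$). It suffices to prove the single implication: if $a,b$ are mild solutions of \eqref{1} on a common interval with data $a_0\le b_0$, then $a(t)\le b(t)$ there. The three assertions $0\le u$, $0\le v$, $u\le v$ then follow by applying this to the pairs $(0,u)$, $(0,v)$, $(u,v)$, since the constant $0$ is a mild solution of \eqref{1} with data $0$; we carry the argument out on each $[0,\tau]$ with $\tau<\min\{T_{\max}(u),T_{\max}(v)\}$, which is all the statement requires.

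Fix such a $\tau$ and set $M:=\max\{\|a\|_{L^\infty((0,\tau)\times\mathbb{H}^n)},\|b\|_{L^\infty((0,\tau)\times\mathbb{H}^n)}\}<\infty$. The device is to absorb a large linear damping term. Writing $\phi(r)=|r|^{p-1}r$, which is $C^1$ with $0\le\phi'(r)=p|r|^{p-1}\le pM^{p-1}$ on $[-M,M]$, choose $\lambda(s):=2p\,|k(s)|\,M^{p-1}\in L^1_{loc}(0,\infty)$ and $\Lambda(t):=\int_0^t\lambda(s)\,ds$. Since $\Delta_{\mathbb{H}}-\lambda(t)$ generates the propagator $e^{-(\Lambda(t)-\Lambda(s))}S_{\mathbb{H}}(t-s)$, the usual manipulation of the Duhamel formula (adding and subtracting $\lambda(t)a$, resp. $\lambda(t)b$) shows that $w:=a-b$ satisfies
$$
w(t)=e^{-\Lambda(t)}S_{\mathbb{H}}(t)(a_0-b_0)+\int_0^t e^{-(\Lambda(t)-\Lambda(s))}S_{\mathbb{H}}(t-s)\Big[\lambda(s)w(s)-k(s)\big(\phi(a(s))-\phi(b(s))\big)\Big]\,ds .
$$
By the mean value theorem, $\phi(a(s))-\phi(b(s))=\phi'(\xi(s))w(s)$ with $|\xi(s)|\le M$, so the bracket equals $c(s)\,w(s)$ with $c(s):=\lambda(s)-k(s)\phi'(\xi(s))$, and $|k(s)\phi'(\xi(s))|\le\tfrac12\lambda(s)$ gives $0\le c(s)\le 2\lambda(s)$ — here the hypothesis that $k$ is of one sign is what makes this modified reaction term monotone in the absorptive regime.

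Now decompose $w=w^+-w^-$. Since $c\ge0$ and $S_{\mathbb{H}}(t-s)$ is order-preserving, $S_{\mathbb{H}}(t-s)[c(s)w(s)]\le S_{\mathbb{H}}(t-s)[c(s)w^+(s)]$, while $a_0-b_0\le0$ makes the first term above $\le0$; hence
$$
w(t)\le\int_0^t e^{-(\Lambda(t)-\Lambda(s))}S_{\mathbb{H}}(t-s)\big[c(s)w^+(s)\big]\,ds=:R(t)\ge0 .
$$
As $R(t)\ge0$ we get $w^+(t)\le R(t)$, and taking $L^\infty$-norms, using $\|S_{\mathbb{H}}(t-s)f\|_{L^\infty}\le\|f\|_{L^\infty}$ and $e^{-(\Lambda(t)-\Lambda(s))}\le1$, yields $\|w^+(t)\|_{L^\infty}\le\int_0^t c(s)\|w^+(s)\|_{L^\infty}\,ds\le\int_0^t 2\lambda(s)\|w^+(s)\|_{L^\infty}\,ds$. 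Since $\lambda\in L^1_{loc}$ and $s\mapsto\|w^+(s)\|_{L^\infty}$ is continuous (hence bounded) on $[0,\tau]$, Gronwall's inequality forces $w^+\equiv0$ on $[0,\tau]$, i.e. $a(t)\le b(t)$. In the setting of \eqref{1} with $k>0$ one then also has $u(t)\le S_{\mathbb{H}}(t)u_0$, whence $\|u(t)\|_{L^\infty}\le\|u_0\|_{L^\infty}$ and $T_{\max}=\infty$, so the estimate extends to all of $[0,\infty)$.

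I expect the only genuinely delicate point to be the justification of the rewritten Duhamel identity with a time-dependent, merely $L^1_{loc}$ damping $\lambda(t)$ — i.e. that $a$ is simultaneously the mild solution of $\partial_t a-\Delta_{\mathbb{H}}a=-k(t)|a|^{p-1}a$ and of $\partial_t a-\Delta_{\mathbb{H}}a+\lambda(t)a=\lambda(t)a-k(t)|a|^{p-1}a$ (and likewise for $b$). This is routine but must be done with some care because of the low regularity of $k$, and it uses uniqueness of mild solutions (Theorem \ref{Local}) to identify the two representations. Everything else — positivity and $L^\infty$-contractivity of $S_{\mathbb{H}}$, the mean value estimate, and Gronwall — is standard, which is exactly why the argument parallels \cite[Lemma~3.5.9]{CH}.
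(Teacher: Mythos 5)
Your argument is correct and is essentially the route the paper intends: the paper gives no proof of its own but refers to \cite[Lemma~3.5.9]{CH}, whose standard mechanism is exactly your device of absorbing a time-dependent damping $\lambda(t)$ into the Duhamel formula so the modified reaction is controlled, then using positivity and $L^\infty$-contractivity of $S_{\mathbb{H}}$ together with Gronwall on $\|(u-v)^+\|_\infty$ (your $\xi(s)$ should be $\xi(s,\eta)$, but this is harmless since only the bound $|\xi|\le M$ is used). One small remark: as written, your proof never actually uses the hypothesis that $k$ keeps a fixed sign, since $\lambda(s)=2p|k(s)|M^{p-1}$ dominates $|k(s)\phi'(\xi)|$ regardless of sign, so your parenthetical about monotonicity ``in the absorptive regime'' is not needed (you prove a slightly stronger statement).
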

The comparison principle ensures the non-negativity of the solution, highlighting its physical meaningfulness, particularly through the mass of the solution $M_{\mathbb{H}}(t)$. As the nonlinearity is of the absorbing type, then by applying the comparison principle, we can control the growth of the solution and extend its existence for all time. This leads to the following global existence result.
\begin{theorem}\label{Global}$(\mbox{Global existence})$\\
Given $0\leq u_0\in  L^1(\mathbb{H}^n)\cap C_0(\mathbb{H}^n)$, $k:(0,\infty)\to(0,\infty)$,  $k\in L^1_{loc}(0,\infty)$, and $p>1$. Then, problem \eqref{1} has a unique global mild solution $u\in C([0,\infty),L^1(\mathbb{H}^n)\cap C_0(\mathbb{H}^n))$ which satisfies $0\leq u(t,\eta)\leq\|u_0\|_{L^\infty}$.
\end{theorem}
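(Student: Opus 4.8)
The plan is to combine Theorem~\ref{Local} with the comparison principle of Lemma~\ref{Comparison} in order to extend the local solution to a global one, and then to use the integral equation \eqref{IE} to propagate the $L^1$-bound. First I would note that since $u_0\in C_0(\mathbb{H}^n)$ and $p>1$, Theorem~\ref{Local} provides a maximal-existence-time $T_{\max}\in(0,\infty]$ and a unique mild solution $u\in C([0,T_{\max}),C_0(\mathbb{H}^n))$. Because $u_0\geq 0$ and $k>0$ does not change sign on $(0,T_{\max})$, Lemma~\ref{Comparison} applied with $v_0=u_0$ gives $u\geq 0$ on $[0,T_{\max})$ (comparison of $u$ with itself yields non-negativity; more formally, one compares $u$ with the solution issued from $v_0 = 0$, which is identically zero, to get the lower bound, and the positivity of the nonlinear term does the rest).

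Next I would establish the $L^\infty$-bound $0\le u(t,\eta)\le \|u_0\|_{L^\infty}$. The constant function $\bar u(t,\eta):=\|u_0\|_{L^\infty(\mathbb{H}^n)}$ is a supersolution of \eqref{1}: since $k\geq 0$ and $\bar u\geq 0$ we have $\bar u_t-\Delta_{\mathbb{H}}\bar u = 0 \geq -k(t)\bar u^p$, and $\bar u(0,\cdot)=\|u_0\|_{L^\infty}\geq u_0$. Strictly speaking $\bar u\notin C_0(\mathbb{H}^n)$, so I would instead invoke the comparison principle in its mild/semigroup form: using $S_{\mathbb{H}}(t)\mathbbm{1}=\mathbbm{1}$ (conservativity of the Heisenberg heat semigroup) and the positivity of $S_{\mathbb{H}}(t)$, a direct induction/contraction argument on \eqref{IE} shows $0\le u(t,\eta)\le S_{\mathbb{H}}(t)\|u_0\|_{L^\infty} = \|u_0\|_{L^\infty}$ as long as the solution exists. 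Since the $L^\infty$-norm of $u$ therefore stays bounded by $\|u_0\|_{L^\infty}$ on $[0,T_{\max})$, the blow-up alternative in Theorem~\ref{Local} forces $T_{\max}=\infty$; hence $u$ is global and $u\in C([0,\infty),C_0(\mathbb{H}^n))$ with $0\le u\le\|u_0\|_{L^\infty}$.

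It remains to show $u\in C([0,\infty),L^1(\mathbb{H}^n))$. Here I would use that $u_0\in L^1(\mathbb{H}^n)$, so Theorem~\ref{Local} (its last assertion with $r=1$) already gives $u\in C([0,T_{\max}),L^1(\mathbb{H}^n)) = C([0,\infty),L^1(\mathbb{H}^n))$. Alternatively, and more quantitatively, one integrates \eqref{IE} over $\mathbb{H}^n$: by Fubini and the fact that $S_{\mathbb{H}}(t)$ preserves the integral of non-negative functions (contractivity plus $S_{\mathbb{H}}(t)^{*}\mathbbm{1}=\mathbbm{1}$),
\begin{equation*}
M_{\mathbb{H}}(t)=\int_{\mathbb{H}^n}u(t,\eta)\,d\eta=\int_{\mathbb{H}^n}u_0\,d\eta-\int_0^t k(s)\int_{\mathbb{H}^n}u(s,\eta)^p\,d\eta\,ds,
\end{equation*}
which, since $u\geq 0$ and $k\geq 0$, shows $0\le M_{\mathbb{H}}(t)\le \|u_0\|_{L^1}$ for all $t\geq 0$, and $M_{\mathbb{H}}$ is non-increasing. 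Together with the $L^\infty$-bound and interpolation this also re-proves $u(t)\in L^r(\mathbb{H}^n)$ for all $r\in[1,\infty]$. Finally, uniqueness in the class $C([0,\infty),L^1\cap C_0)$ follows from the uniqueness already asserted in Theorem~\ref{Local}.

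The main obstacle is the justification of the $L^\infty$-comparison against the constant supersolution $\|u_0\|_{L^\infty}$, since constants are not in $C_0(\mathbb{H}^n)$ and hence fall outside the exact hypotheses of Lemma~\ref{Comparison}; the clean way around this is to argue directly at the level of the mild formulation \eqref{IE} using the order-preserving and conservative ($S_{\mathbb{H}}(t)\mathbbm{1}=\mathbbm{1}$) properties of the Heisenberg heat semigroup, which is precisely the ingredient that makes the absorbing nonlinearity harmless for global existence. Everything else is a routine assembling of Theorems~\ref{Local} and the comparison principle.
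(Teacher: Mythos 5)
Your proposal is correct and follows essentially the same route the paper intends (the paper only sketches it: local existence from Theorem~\ref{Local}, the comparison principle for non-negativity, the absorbing nonlinearity for the uniform bound, and the blow-up alternative plus the $r=1$ case for the $L^1$ statement). The only difference is cosmetic: rather than comparing with the constant $\|u_0\|_{L^\infty}$ (which, as you note, is not in $C_0(\mathbb{H}^n)$), the paper implicitly uses $0\le u(t)\le S_{\mathbb{H}}(t)u_0$ directly from \eqref{IE} together with the $L^\infty$-contraction property in Proposition~\ref{properties}, which gives the same bound slightly more directly than your $S_{\mathbb{H}}(t)\mathbbm{1}=\mathbbm{1}$ argument.
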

\begin{remark}
Assume that $k\in C((0,\infty),(0,\infty))$. Then, by applying a standard bootstrap argument, any mild solution $u$ of \eqref{1} defined on an interval $[0, T]$ possesses $C^1$-regularity in time and $C^2$-regularity in space. Consequently, the solution $u$ satisfies the equation $\partial_t u-\Delta_{\mathbb{H}}u=- k(t)u^p$ in the pointwise sense.
\end{remark}
We deal with problem \eqref{1} and we study the decay of the ``mass''
\begin{equation}\label{mass}
    M_{\mathbb{H}}(t)= \int_{\mathbb{H}^n}u(t,\eta)\,d\eta=\int_{\mathbb{H}^n}u_0(\eta)\,d\eta -\int_0^t\int_{\mathbb{H}^n}k(s)u^p(s,\eta)\,d\eta\,ds.
\end{equation}
In order to obtain equality \eqref{mass}, it
suffices to integrate \eqref{IE} with respect to $\eta$, using property $(v)$ in Proposition \ref{properties} below, and applying Fubini's theorem.\\
Since we limit ourselves to non-negative solutions, the function
$M_{\mathbb{H}}(t)$ defined in \eqref{mass} is non-negative and non-increasing. As a result, $M_\infty = \lim\limits_{t\rightarrow\infty}M_{\mathbb{H}}(t)$, raising the natural question of whether $M_\infty =0$ or $M_\infty >0$. In our first theorem, we show that diffusion phenomena play a crucial role in determining the large-time asymptotic behavior of solutions to problem \eqref{1}.
\begin{theorem}\label{decay}
Let $u$ be a non-negative nontrivial global mild solution of
\eqref{1}. If
\begin{equation}\label{conditionh}
 \int_1^\infty  t^{-\frac{Q}{2}(p-1)}k(t)dt<\infty,
\end{equation} 
then
\begin{equation}\label{2.4}
\lim_{t\to\infty}M_{\mathbb{H}}(t)=M_\infty >0.
\end{equation}
Moreover, for all $q\in[1,\infty)$,
\begin{equation}\label{2.3}
    t^{\frac{Q}{2}\left(1-\frac{1}{q}\right)}\left\|u(t)-M_\infty
h_t\right\|_q\longrightarrow
0\quad\hbox{as}\;\; t\to\infty.
\end{equation}
\end{theorem}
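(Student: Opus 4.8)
The plan is to follow the classical scheme of Gmira--V\'eron and Fino--Karch, transplanted to the Heisenberg setting where the Gaussian estimate $\|S_{\mathbb H}(t)\|_{L^r\to L^q}\lesssim t^{-\frac Q2(\frac1r-\frac1q)}$ plays the role of the Euclidean heat-kernel bound; here $h_t$ denotes the Heisenberg heat kernel at time $t$ (total mass one). The starting point is the mass identity \eqref{mass}: since $M_{\mathbb H}$ is non-increasing and non-negative, $M_\infty$ exists, and $M_\infty>0$ is equivalent to $\int_0^\infty\!\!\int_{\mathbb H^n}k(s)u^p(s,\eta)\,d\eta\,ds<\|u_0\|_{L^1}$, i.e. to showing the loss of mass is strictly less than the initial mass. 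The first step is therefore to obtain a decay estimate $\|u(t)\|_{L^\infty}\lesssim t^{-Q/2}$ and, more precisely, $\|u(t)\|_{L^q}\lesssim t^{-\frac Q2(1-1/q)}$. Since $0\le u\le S_{\mathbb H}(t)u_0$ by the comparison principle (the absorption term is nonnegative, so $S_{\mathbb H}(t)u_0$ is a supersolution), these bounds come immediately from $u_0\in L^1$ and the $L^1\to L^q$ smoothing estimate of the Heisenberg semigroup (Proposition \ref{properties}).

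Next I would use these decay rates to estimate the defect $\int_0^t\!\!\int k(s)u^p\,d\eta\,ds$. Writing $\int_{\mathbb H^n}u^p\,d\eta\le \|u(s)\|_\infty^{p-1}\|u(s)\|_{L^1}\lesssim \|u(s)\|_\infty^{p-1}\|u_0\|_{L^1}$ and using $\|u(s)\|_\infty\lesssim s^{-Q/2}$ for $s\ge1$, the tail integral is controlled by $\int_1^\infty s^{-\frac Q2(p-1)}k(s)\,ds<\infty$, which is exactly hypothesis \eqref{conditionh}; the part over $[0,1]$ is finite because $u$ is bounded and $k\in L^1_{loc}$. This shows the total dissipated mass is finite. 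To upgrade "finite" to "strictly less than $\|u_0\|_{L^1}$" — which is what $M_\infty>0$ actually requires — I would invoke a time-shift/bootstrap argument: for large $t_0$, apply the same estimate to the solution started at $t_0$ with initial mass $M_{\mathbb H}(t_0)$, so that the dissipated mass on $[t_0,\infty)$ is $\lesssim M_{\mathbb H}(t_0)\int_{t_0}^\infty s^{-\frac Q2(p-1)}k(s)\,ds$, which is strictly smaller than $M_{\mathbb H}(t_0)$ once $t_0$ is large (the tail of the convergent integral is $<1/C$). Hence $M_\infty\ge M_{\mathbb H}(t_0)-(\text{something}<M_{\mathbb H}(t_0))>0$, provided $M_{\mathbb H}(t_0)>0$ for all $t_0$; this last positivity follows from nontriviality of $u_0$ together with the strong maximum / comparison principle (comparing with the pure heat flow, whose mass is conserved and positive).

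For the asymptotic profile \eqref{2.3}, I would argue as in Fino--Karch. From the Duhamel formula \eqref{IE},
\[
u(t)-M_\infty h_t=\bigl(S_{\mathbb H}(t)u_0-M_0 h_t\bigr)+(M_0-M_\infty)h_t-\int_0^t S_{\mathbb H}(t-s)k(s)u^p(s)\,ds,
\]
where $M_0=\|u_0\|_{L^1}$. Multiplying by $t^{\frac Q2(1-1/q)}$ and taking $\|\cdot\|_q$, the first term goes to $0$ by the standard fact that $S_{\mathbb H}(t)u_0$ is asymptotically $M_0h_t$ in the self-similar sense for $L^1$ data (an $L^1\to L^q$ approximate-identity argument after rescaling), the second term is $(M_0-M_\infty)t^{\frac Q2(1-1/q)}\|h_t\|_q=c\,(M_0-M_\infty)$, which is a fixed constant — so to get convergence to $0$ one must instead group $M_\infty h_t$ with the tail of the integral: write $M_\infty=M_0-\int_0^\infty\!\!\int k u^p$, so $(M_0-M_\infty)h_t-\int_0^t S_{\mathbb H}(t-s)k(s)u^p\,ds=\int_0^t\bigl[h_t\!\!\int_{\mathbb H^n}k(s)u^p(s)d\eta-S_{\mathbb H}(t-s)k(s)u^p(s)\bigr]ds+h_t\int_t^\infty\!\!\int k u^p$. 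The last piece is $O\bigl(t^{-\frac Q2(1-1/q)}\int_t^\infty s^{-\frac Q2(p-1)}k(s)ds\bigr)\to0$; the integral piece is handled by splitting at $s=t/2$, using on $[0,t/2]$ the smoothing estimate and the cancellation $\|S_{\mathbb H}(t-s)\mu-(\int\!d\mu)h_t\|_q\to0$ after rescaling for the finite measure $\mu=k(s)u^p(s)\,d\eta$, and on $[t/2,t]$ crude $L^1\to L^q$ bounds together with the integrability of $s^{-\frac Q2(p-1)}k(s)$ near infinity.

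The main obstacle I anticipate is precisely the last step: making the dominated-convergence / self-similar-cancellation argument on $[0,t/2]$ rigorous and uniform, i.e. proving $\sup_{0\le s\le t/2} t^{\frac Q2(1-1/q)}\bigl\|S_{\mathbb H}(t-s)\bigl(k(s)u^p(s)\bigr)-\bigl(\int_{\mathbb H^n}k(s)u^p(s)\bigr)h_t\bigr\|_q\to0$ with an integrable-in-$s$ majorant, which on the Heisenberg group requires the scaling invariance $h_t(\eta)=t^{-Q/2}h_1(\delta_{1/\sqrt t}\eta)$ of the sub-Laplacian heat kernel and uniform continuity of $\eta\mapsto h_1$ in $L^1$; the Euclidean proof goes through essentially verbatim once one has the homogeneous dilations $\delta_\lambda$ and the fact that $d\eta$ is the Haar measure scaling like $\lambda^Q$, but the bookkeeping of the two regimes in $s$ against the weight $k(s)$ is where the care is needed. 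Everything else reduces to the $L^p$--$L^q$ estimates already granted by Proposition \ref{properties} and the comparison principle of Lemma \ref{Comparison}.
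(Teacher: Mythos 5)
Your overall architecture is sound, but both halves take a genuinely different route from the paper. For $M_\infty>0$, the paper does not bootstrap in time: it introduces the solution $u^\varepsilon$ with initial data $\varepsilon u_0$, uses Lemma \ref{Comparison} to get $M_\infty\ge M^\varepsilon_\infty$, and exploits the scaling $\|u^\varepsilon(t)\|_p^p\le\varepsilon^p H(t,p,u_0)$ with $H=\min\{Ct^{-\frac Q2(p-1)}\|u_0\|_1^p,\|u_0\|_p^p\}$, so that the dissipated mass of $u^\varepsilon$ is $O(\varepsilon^p)=o(\varepsilon)$ while its initial mass is $\varepsilon\int u_0$; choosing $\varepsilon_0$ small gives $M_\infty\ge\frac{\varepsilon_0}{2}\int u_0>0$ with no lower bound on $u$ ever needed. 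For \eqref{2.3}, the paper restarts Duhamel at a large time $t_0$ and splits $u(t)-M_\infty h_t$ into four pieces (the Duhamel tail on $[t_0,t]$ controlled by \eqref{3.2}, $h_{t-t_0}\ast_{_{\mathbb{H}}}u(t_0)-M_{\mathbb{H}}(t_0)h_{t-t_0}$ handled by Lemma \ref{Taylor}, $h_{t-t_0}-h_t$ handled by Lemma \ref{Taylor} with $g=h_{t_0}$ and the semigroup property, and $|M_{\mathbb{H}}(t_0)-M_\infty|$), proving the case $q=1$ first and then obtaining all $q>1$ by interpolation against the decay of $\|u(t)\|_m$ and $\|h_t\|_m$. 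This avoids precisely the splitting of the $s$-integral and the dominated-convergence bookkeeping that you identify as your main obstacle; your Fino--Karch decomposition from $t=0$ can be made rigorous (the approximate-identity ingredient you describe is essentially the paper's Lemma \ref{Taylor}), but it is the heavier route here.

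Two concrete points in your sketch would fail as written. First, your time-shift argument needs $M_{\mathbb{H}}(t_0)>0$ for large $t_0$, and you propose to obtain it by ``comparing with the pure heat flow''; that comparison goes the wrong way, since $S_{\mathbb{H}}(t)u_0$ dominates $u$ from \emph{above} and conservation of its mass gives no lower bound on $u$, so the conclusion $M_\infty\ge\frac12 M_{\mathbb{H}}(t_0)$ could be vacuous. You would need a genuine subsolution, e.g. $e^{-\|u_0\|_\infty^{p-1}\int_0^t k(s)ds}\,S_{\mathbb{H}}(t)u_0$, which in turn requires a sub/supersolution version of Lemma \ref{Comparison} (the stated lemma compares two mild solutions); the paper's $\varepsilon u_0$ device sidesteps this entirely, using only $\int u_0>0$. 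Second, on the interval $[t/2,t]$ your ``crude $L^1\to L^q$ bounds'' produce the factor $(t-s)^{-\frac Q2\left(1-\frac1q\right)}$, which is not integrable near $s=t$ once $\frac Q2\left(1-\frac1q\right)\ge1$ --- already for $q\ge2$ since $Q\ge4$. The fix is to use the $L^q$-contraction of $S_{\mathbb{H}}(t-s)$ together with $\|u(s)\|_{pq}^p\lesssim s^{-\frac Q2\left(p-\frac1q\right)}\|u_0\|_1^p$ from Lemma \ref{Lp-Lqestimate}, which after multiplying by $t^{\frac Q2\left(1-\frac1q\right)}$ is controlled by $\int_{t/2}^\infty s^{-\frac Q2(p-1)}k(s)\,ds\to0$; alternatively, prove $q=1$ only and interpolate, as the paper does. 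With these repairs your plan goes through.
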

\begin{remark}\label{remark1} The condition \eqref{conditionh} can be replaced by
$$ \int_{t_0}^\infty  t^{-\frac{Q}{2}(p-1)}k(t)dt<\infty,\qquad\hbox{for any}\,\,t_0>0.$$
On the other hand, if $k\in L^\infty(0,\infty)$ and $p> 1+\frac{2}{Q}$, then the condition \eqref{conditionh} in Theorem \ref{decay} is fulfilled. Indeed, as $ p>1+\frac{2}{Q}\Rightarrow\frac{Q(p-1)}{2}>1$, we have
$$\int_1^\infty  t^{-\frac{Q}{2}(p-1)}k(t)dt\leq \|k\|_\infty\int_1^\infty  t^{-\frac{Q}{2}(p-1)}dt<\infty.$$
\end{remark}

In the remaining range of $p$, the mass $M_{\mathbb{H}}(t)$ converges to zero and
this phenomena can be interpreted as the domination of nonlinear
effects in the large time asymptotic of solutions to \eqref{1}.
\begin{theorem}\label{convto0}
Let $u=u(x,t)$ be a non-negative global mild solution of problem
$\eqref{1}$. If $\displaystyle \inf_{t\geq 0}k(t)>0$ and $1<p\leq 1+\frac{2}{Q}$, then
$$
\lim_{t\to\infty}M_{\mathbb{H}}(t)=M_\infty=0.
$$
\end{theorem}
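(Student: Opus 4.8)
The plan is to argue by contradiction via the rescaled test function method, using the localized version of the mass identity \eqref{mass} at the parabolic scale $R\sim\sqrt t$. Suppose $M_\infty=\lim_{t\to\infty}M_{\mathbb{H}}(t)>0$. Since $M_{\mathbb{H}}$ is non‑increasing we have $M_{\mathbb{H}}(t)\ge M_\infty$ for all $t\ge0$, and by \eqref{mass} together with monotone convergence
\[ \int_0^\infty\!\!\int_{\mathbb{H}^n}k(s)u^p(s,\eta)\,d\eta\,ds \;=\; M_0-M_\infty\;\le\;M_0:=\int_{\mathbb{H}^n}u_0\,d\eta\;<\;\infty . \]
The whole proof amounts to contradicting this finiteness, using the hypotheses $\kappa:=\inf_{t\ge0}k(t)>0$ and $1<p\le 1+2/Q$.

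First I would localize the mass identity. Fix $\phi\in C_c^\infty(\mathbb{H}^n)$ with $0\le\phi\le1$, $\phi\equiv1$ on the unit gauge ball $B_1$ and $\operatorname{supp}\phi\subset B_2$, and set $\phi_R(\eta):=\phi(\delta_{1/R}\eta)$, $\delta_\lambda$ the Heisenberg dilation; by the degree‑$2$ homogeneity of $\Delta_{\mathbb{H}}$ one has $\|\Delta_{\mathbb{H}}\phi_R\|_\infty\le CR^{-2}$ with $\Delta_{\mathbb{H}}\phi_R$ supported in $B_{2R}\setminus B_R$. Exactly as \eqref{mass} is derived from \eqref{IE} (self‑adjointness of $S_{\mathbb{H}}$ and Fubini; alternatively, one first approximates $k$ by continuous weights and reduces to classical solutions via the bootstrap regularity remark and Lemma \ref{Comparison}) one obtains, for every $t\ge0$,
\[ \int_{\mathbb{H}^n}u(t)\phi_R\,d\eta=\int_{\mathbb{H}^n}u_0\phi_R\,d\eta+\int_0^t\!\!\int_{\mathbb{H}^n}u\,\Delta_{\mathbb{H}}\phi_R\,d\eta\,ds-\int_0^t\!\!\int_{\mathbb{H}^n}k(s)u^p\phi_R\,d\eta\,ds . \]
Using $\big|\int u\,\Delta_{\mathbb{H}}\phi_R\big|\le CR^{-2}M_{\mathbb{H}}(s)\le CR^{-2}M_0$, $\int u_0\phi_R\ge\int_{B_R}u_0$, and $\int_0^t\!\int k u^p\phi_R\le\int_0^t\!\int ku^p=M_0-M_{\mathbb{H}}(t)\le M_0-M_\infty$, this gives $\int_{\mathbb{H}^n}u(t)\phi_R\,d\eta\ge\int_{B_R}u_0\,d\eta-CM_0t/R^2-(M_0-M_\infty)$. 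Now I choose the test function at the parabolic scale $R=\lambda\sqrt t$ (with $\lambda\ge1$, $t\ge1$), so that $\int_{B_{\lambda\sqrt t}}u_0\ge\int_{B_\lambda}u_0=M_0-\int_{\mathbb{H}^n\setminus B_\lambda}u_0$, and obtain
\[ \int_{\mathbb{H}^n}u(t)\phi_{\lambda\sqrt t}\,d\eta\;\ge\;M_\infty-\Big(\int_{\mathbb{H}^n\setminus B_\lambda}u_0\,d\eta+\frac{CM_0}{\lambda^2}\Big)\qquad(t\ge1). \]
Since $u_0\in L^1(\mathbb{H}^n)$, the bracket tends to $0$ as $\lambda\to\infty$, so — this is where $M_\infty>0$ is used — I may fix $\lambda_0\ge1$ making it $<M_\infty/2$; as $\phi_{\lambda_0\sqrt t}\le\mathbf{1}_{B_{2\lambda_0\sqrt t}}$ this yields the uniform spatial confinement $\int_{B_{2\lambda_0\sqrt t}}u(t,\eta)\,d\eta>M_\infty/2$ for all $t\ge1$.

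From here the conclusion is a one‑line Jensen/Hölder estimate. Writing $r_t:=2\lambda_0\sqrt t$ and $|B_{r_t}|=c_Q(2\lambda_0)^Q t^{Q/2}$ (with $c_Q=|B_1|$), Hölder gives $M_\infty/2<\int_{B_{r_t}}u(t)\le|B_{r_t}|^{1-1/p}\big(\int_{B_{r_t}}u^p(t)\big)^{1/p}$, hence
\[ \int_{\mathbb{H}^n}u^p(t,\eta)\,d\eta\;\ge\;\Big(\tfrac{M_\infty}{2}\Big)^{p}\big(c_Q(2\lambda_0)^{Q}\big)^{-(p-1)}\,t^{-\frac{Q}{2}(p-1)}\;=:\;c_0\,t^{-\frac{Q}{2}(p-1)}\qquad(t\ge1), \]
with $c_0>0$. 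Because $1<p\le 1+2/Q$ forces $\tfrac{Q}{2}(p-1)\le1$, and $\kappa>0$, we get $\int_0^\infty\!\int_{\mathbb{H}^n}k u^p\ge\kappa c_0\int_1^\infty t^{-\frac{Q}{2}(p-1)}\,dt=+\infty$, contradicting $\int_0^\infty\!\int_{\mathbb{H}^n}k u^p=M_0-M_\infty<\infty$. Therefore $M_\infty=0$.

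The main obstacle is the uniform‑in‑$t$ spatial confinement of the surviving mass inside $B_{\lambda\sqrt t}$: this is precisely where the parabolic scaling $R\sim\sqrt t$ and the degree‑$2$ homogeneity of $\Delta_{\mathbb{H}}$ are exploited, and it is what makes $p=1+2/Q$ the critical exponent (through the divergence of $\int_1^\infty t^{-Q(p-1)/2}\,dt$, with the homogeneous dimension $Q$ playing the role of the Euclidean $N$). The two supporting ingredients — the localized mass identity coming from the mild formulation, and the elementary Hölder step — are routine; the only mildly delicate point is justifying the localized identity for merely $L^1_{\mathrm{loc}}$ weights $k$, which is bypassed by the approximation indicated above.
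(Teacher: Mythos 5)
Your argument is correct, and it reaches the conclusion by a genuinely different route than the paper. The paper works with the space--time rescaled test functions $\varphi_R(t,\eta)=\Phi^{\ell}\big((t+|\eta|_{_{\mathbb{H}}}^2)/R\big)$ of \eqref{varphi_R}: it inserts them into the weak formulation \eqref{newweaksolution}, applies H\"older (nonlinear capacity estimate, Lemma \ref{lem:testfunction}) to the right-hand side, and then handles the whole range $p\le 1+2/Q$, including the critical exponent, through the auxiliary function $Y(R)=\int_R^\infty\big(\iint u^p\varphi_\rho^*\big)\frac{d\rho}{\rho}$, ending with the divergence of $\int_{R_1}^\infty s^{-\frac{Q}{2}(p-1)}ds$. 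You instead use a purely spatial cutoff at the parabolic scale $R=\lambda\sqrt t$ to prove uniform confinement of the surviving mass, $\int_{B_{2\lambda_0\sqrt t}}u(t)>M_\infty/2$, then a single H\"older step on the ball gives $\|u(t)\|_p^p\gtrsim t^{-\frac{Q}{2}(p-1)}$, which contradicts $\int_0^\infty\!\int k\,u^p\le \|u_0\|_1<\infty$ because $\inf k>0$ and $\frac{Q}{2}(p-1)\le 1$; this is the classical Gmira--V\'eron/Quittner--Souplet scheme transplanted to $\mathbb{H}^n$ via the degree-$2$ homogeneity of $\Delta_{\mathbb{H}}$ and the homogeneous dimension $Q$. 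Each route has its merits: yours is more elementary (no capacity estimate, no $Y(R)$ trick, criticality handled automatically by the same divergent time integral) and yields quantitative by-products (mass confinement and the lower bound on $\|u(t)\|_p$ under the contradiction hypothesis), while the paper's capacity method works with one family of test functions and avoids the space-localized identity. One remark on rigor: your localized identity is not obtained ``exactly as'' \eqref{mass} (which only uses $\int h_t\,d\eta=1$); it requires moving the semigroup onto $\phi_R$ by symmetry of $h_t$ (Proposition \ref{properties} (viii)) and the identity $S_{\mathbb{H}}(\sigma)\phi_R=\phi_R+\int_0^\sigma S_{\mathbb{H}}(\tau)\Delta_{\mathbb{H}}\phi_R\,d\tau$ plus Fubini, or, more simply, it is exactly the paper's weak formulation \eqref{newweaksolution} with the time-independent test function $\varphi=\phi_R\in W^{1,\infty}([0,T),W^{2,\infty}(\mathbb{H}^n))$; either way it is justified at the same level of rigor the paper itself uses, so there is no gap.
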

An immediate consequence of Theorems \ref{decay} and \ref{convto0}, using Remark \ref{remark1}, is the following

\begin{theorem}\label{general}
Let $u=u(x,t)$ be a non-negative nontrivial global mild solution of problem
$\eqref{1}$. If $k\in L^\infty(0,\infty)$ and $\displaystyle \inf_{t\geq 0}k(t)>0$, then we have two cases:
\begin{itemize}
\item If $p>1+\frac{2}{Q}$, then $M_\infty>0$. Moreover, \eqref{2.3} holds for all $q\in[1,\infty)$.
\item If $1<p\leq 1+\frac{2}{Q}$, then $M_\infty=0$.
\end{itemize}
\end{theorem}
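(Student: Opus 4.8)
The plan is simply to dispatch the two cases by invoking the earlier theorems, since Theorem~\ref{general} is a packaging of Theorem~\ref{decay} and Theorem~\ref{convto0} under the single hypothesis that $k$ is bounded and bounded away from zero. No new analysis is needed; the only point to check is that the hypotheses of each theorem are met in the respective parameter range.

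First I would treat the supercritical case $p>1+\frac{2}{Q}$. Here I would verify condition \eqref{conditionh} using the observation already recorded in Remark~\ref{remark1}: since $p>1+\frac{2}{Q}$ is equivalent to $\frac{Q(p-1)}{2}>1$, and since $k\in L^\infty(0,\infty)$, one has
\begin{equation*}
\int_1^\infty t^{-\frac{Q}{2}(p-1)}k(t)\,dt\leq \|k\|_{L^\infty(0,\infty)}\int_1^\infty t^{-\frac{Q}{2}(p-1)}\,dt<\infty .
\end{equation*}
Thus Theorem~\ref{decay} applies verbatim and yields both $M_\infty>0$ and the asymptotic profile estimate \eqref{2.3} for every $q\in[1,\infty)$.

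Next I would treat the subcritical/critical case $1<p\leq 1+\frac{2}{Q}$. The hypothesis $\inf_{t\geq0}k(t)>0$ is exactly what Theorem~\ref{convto0} requires (the boundedness of $k$ is not even needed here), so that theorem directly gives $M_\infty=0$. Combining the two cases completes the proof. There is no genuine obstacle: the substantive work lies entirely in Theorems~\ref{decay} and \ref{convto0}, and the role of Theorem~\ref{general} is only to state the dichotomy cleanly once the two-sided bound on $k$ is assumed. The one thing worth stating explicitly in the write-up is the elementary implication $p>1+\frac{2}{Q}\iff \frac{Q(p-1)}{2}>1$, which is what makes the integral above finite and hence lets Remark~\ref{remark1} feed into Theorem~\ref{decay}.
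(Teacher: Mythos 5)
Your proposal is correct and follows exactly the paper's route: the paper presents Theorem~\ref{general} as an immediate consequence of Theorems~\ref{decay} and \ref{convto0} together with Remark~\ref{remark1}, which is precisely the verification you carry out (boundedness of $k$ gives \eqref{conditionh} when $p>1+\frac{2}{Q}$, and $\inf_{t\geq0}k(t)>0$ feeds Theorem~\ref{convto0} in the remaining range). Nothing further is needed.
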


The paper is organized as follows. In Section \ref{sec2}, we present some definitions, terminologies, and preliminary results concerning the Heisenberg group, the sub-Laplacian operator and its heat kernel. Sections \ref{sec3} and \ref{sec4} are  devoted to prove our main results.

\medskip
\noindent\textbf{Notation:}  We shall employ the following notational conventions in what follows. For $\Omega \subset \mathbb{R}$, the characteristic function on $\Omega$ is denoted by $\mathbbm{1}_\Omega$. For any nonnegative functions $g$ and $h$, we write $f\lesssim g$ whenever there exist positive constant $C>0$ such that $f\leqslant Cg$. We use $C$ as a generic constant, which may change value from line to line. Norms in Lebesgue spaces $L^p$ are written as $\| \cdot \|_p$, for $p \in [1, \infty]$.


\section{Preliminaries}\label{sec2}

\subsection{Heisenberg group}
The Heisenberg group $\mathbb{H}^n$  is the space $\mathbb{R}^{2n+1}=\mathbb{R}^n\times \mathbb{R}^n\times \mathbb{R}$ equipped with the group operation
$$\eta\circ\eta^\prime=(x+x^\prime,y+y^\prime,\tau+\tau^\prime+2(x\cdotp y^\prime-x^\prime\cdotp y)),$$
where $\eta=(x,y,\tau)$, $\eta^\prime=(x^\prime,y^\prime,\tau^\prime)$, and $\cdotp$ is the standard scalar product in $\mathbb{R}^n$. Let us denote the parabolic dilation in $\mathbb{R}^{2n+1}$ by $\delta_\lambda$, namely, $\delta_\lambda(\eta)=(\lambda x, \lambda y, \lambda^2 \tau)$ for any $\lambda>0$, $\eta=(x,y,\tau)\in\mathbb{H}^n$. The Jacobian determinant of $\delta_\lambda$ is $\lambda^Q$, where $Q=2n+2$ is the homogeneous dimension of $\mathbb{H}^n$. A direct calculation shows that $\delta_\lambda$ is an automorphism of $\mathbb{H}^n$ for every $\lambda>0$, and therefore  $\mathbb{H}^n=(\mathbb{R}^{2n+1},\circ,\delta_\lambda)$ is a homogeneous Lie group on $\mathbb{R}^{2n+1}$.

The homogeneous Heisenberg norm (also called Kor\'anyi norm) is derived from an anisotropic dilation on the Heisenberg group and defined by
$$|\eta|_{_{\mathbb{H}}}=\left(\left(\sum_{i=1}^n (x_i^2+y_i^2)\right)^2+\tau^2\right)^{\frac{1}{4}}=\left((|x|^2+|y|^2)^2+\tau^2\right)^{\frac{1}{4}},$$
where $|\cdotp|$ is the Euclidean norm associated to $\mathbb{R}^n$. The associated Kor\'anyi distance between two points $\eta$ and $\xi$ of $\mathbb{H}$ is defined by
$$d_{_{\mathbb{H}}}(\eta,\xi)=|\xi^{-1}\circ\eta|_{_{\mathbb{H}}},\quad \eta,\xi\in\mathbb{H},$$
where $\xi^{-1}$ denotes the inverse of $\xi$ with respect to the group action, i.e. $\xi^{-1}=-\xi$. This metric induces a topology on $\mathbb{H}^n$. Thus, we can define the Heisenberg ball of $\mathbb{H}^n$, centered
at $\eta$ and with radius $r>0$, as $B_{\mathbb{H}}(\eta,r)=\{\xi\in\mathbb{H}^n:\,d_{_{\mathbb{H}}}(\eta,\xi)<r\}$.

The Heisenberg convolution between any two regular functions $f$ and $g$ is defined by
$$(f\ast_{_{\mathbb{H}}}g)(\eta)=\int_{\mathbb{H}^n}f(\eta\circ\xi^{-1})g(\xi)\,d\xi=\int_{\mathbb{H}^n}f(\xi)g(\xi^{-1}\circ\eta)\,d\xi.$$

The left-invariant vector fields that span the Lie algebra are given by
$$X_i=\partial_{x_i}-2 y_i\partial_\tau,\qquad Y_i=\partial_{y_i}+2 x_i\partial_\tau.$$
The Heisenberg gradient is given by
\begin{equation}\label{48}
\nabla_{\mathbb{H}}=(X_1,\dots,X_n,Y_1,\dots,Y_n),
\end{equation}
and the sub-Laplacian (also referred to as Kohn Laplacian) is defined as
\begin{equation}\label{40}
\Delta_{\mathbb{H}}=\sum_{i=1}^n(X_i^2+Y_i^2)=\Delta_x+\Delta_y+4(|x|^2+|y|^2)\partial_\tau^2+4\sum_{i=1}^{n}\left(x_i\partial_{y_i\tau}^2-y_i\partial_{x_i\tau}^2\right),
\end{equation}
where $\Delta_x=\nabla_x\cdotp\nabla_x$ and $\Delta_y=\nabla_y\cdotp\nabla_y$ stand for the Laplace operators on $\mathbb{R}^n$.

\subsection{Heat kernel}
We recall the definition and some properties related to the heat kernel associated to $-\Delta_{\mathbb{H}}$ on the Heisenberg group.
\begin{proposition}\label{Heat}\cite[Theorem~8]{BahouriGallagher}${}$\\
There exists a function $h\in\mathcal{S}(\mathbb{H}^n)$ such that if u denotes the
solution of the free heat equation on the Heisenberg group
\begin{equation}\label{}
\begin{array}{ll}
\partial_t u-\Delta_{\mathbb{H}}u=0,&\qquad {\eta\in \mathbb{H}^n,\,\,\,t>0,}
 \\{}\\ u(\eta,0)=u_{0}(\eta), &\qquad \eta\in \mathbb{H}^n,
 \end{array}
\end{equation}
then we have
$$u(\cdotp,t)=h_t\ast_{_{\mathbb{H}}} u_0,$$
where $\ast_{_{\mathbb{H}}}$ denotes the convolution on the Heisenberg group defined above, while the heat kernel $(t,\eta)\in(0,\infty)\times\mathbb{H}^n\mapsto h_t(\eta)$ associated to $-\Delta_{\mathbb{H}}$ is defined by
$$h_t(\eta)=\frac{1}{t^{n+1}}h\left(\frac{x}{\sqrt{t}},\frac{y}{\sqrt{t}},\frac{s}{t} \right),\qquad\hbox{for all}\,\,\,\eta=(x,y,s)\in\mathbb{H}^n, \,\,t>0.$$
\end{proposition}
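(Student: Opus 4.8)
This is a classical fact about the Heisenberg sub-Laplacian, going back in various forms to Gaveau, Folland and Hulanicki, which is why the author simply cites \cite[Theorem~8]{BahouriGallagher}; to prove it from scratch the plan would be the following. \textbf{Step 1 (the semigroup).} Using \eqref{40} and the explicit fields from \eqref{48} one checks that $[X_i,Y_i]=4\partial_\tau$, so $X_1,\dots,X_n,Y_1,\dots,Y_n$ together with their first brackets span the tangent space at every point; hence $\Delta_{\mathbb{H}}$ is bracket-generating and $\partial_t-\Delta_{\mathbb{H}}$ is hypoelliptic. Since the $X_i,Y_i$ are left-invariant and skew-adjoint on $L^2(\mathbb{H}^n)$ for the Haar measure (which here is Lebesgue measure on $\mathbb{R}^{2n+1}$), the operator $\Delta_{\mathbb{H}}\le 0$ is essentially self-adjoint on $C_c^\infty(\mathbb{H}^n)$, so by the spectral theorem it generates a strongly continuous contraction semigroup $e^{t\Delta_{\mathbb{H}}}$, and standard parabolic theory identifies the solution of the free heat equation with $u(\cdot,t)=e^{t\Delta_{\mathbb{H}}}u_0$.

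\textbf{Step 2 (convolution kernel and regularity).} Because $\Delta_{\mathbb{H}}$ commutes with left translations, so does $e^{t\Delta_{\mathbb{H}}}$, hence it acts by right $\mathbb{H}$-convolution against a family of tempered distributions, $e^{t\Delta_{\mathbb{H}}}u_0=h_t\ast_{\mathbb{H}}u_0$. The kernel $h_t$ solves $(\partial_t-\Delta_{\mathbb{H}})h_t=0$ on $(0,\infty)\times\mathbb{H}^n$, so hypoellipticity makes $(t,\eta)\mapsto h_t(\eta)$ smooth for $t>0$, while the parabolic maximum principle yields $h_t>0$ and $\int_{\mathbb{H}^n}h_t\,d\eta=1$.

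\textbf{Step 3 (self-similarity).} A direct computation with \eqref{40} gives $\Delta_{\mathbb{H}}(f\circ\delta_\lambda)=\lambda^2(\Delta_{\mathbb{H}}f)\circ\delta_\lambda$, so if $u$ solves the free heat equation with data $u_0$ then $(\eta,t)\mapsto u(\delta_\lambda\eta,\lambda^2 t)$ solves it with data $u_0\circ\delta_\lambda$. Expressing both sides through the kernel, changing variables by the automorphism $\delta_\lambda$ (Jacobian $\lambda^Q$) and using that $\delta_\lambda$ is a group homomorphism, one obtains $h_t(\eta)=\lambda^Q h_{\lambda^2 t}(\delta_\lambda\eta)$ for every $\lambda>0$. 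Choosing $\lambda=t^{-1/2}$ and setting $h:=h_1$ gives $h_t(\eta)=t^{-Q/2}h(\delta_{t^{-1/2}}\eta)=t^{-(n+1)}h(x/\sqrt t,\,y/\sqrt t,\,\tau/t)$, since $Q/2=n+1$.

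\textbf{Step 4 (Schwartz decay) — the main obstacle.} It remains to show $h=h_1\in\mathcal{S}(\mathbb{H}^n)$, and this is the one place where the fine structure of $\mathbb{H}^n$ is genuinely used; it is the real content of the cited theorem. I would obtain it by either (i) Gaveau's explicit integral representation of the Heisenberg heat kernel, from which Gaussian-type decay in the Korányi norm $|\eta|_{\mathbb{H}}$ together with rapid decay of all $\nabla_{\mathbb{H}}$-derivatives can be read off, or (ii) Hulanicki's functional-analytic argument: combine the subelliptic a priori estimates furnished by hypoellipticity with the rapid $L^2$-decay of $e^{\Delta_{\mathbb{H}}}$ applied to spatially localized data to control every seminorm $\sup_\eta |\eta|_{\mathbb{H}}^{N}\,|\nabla_{\mathbb{H}}^k h_1(\eta)|$. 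Steps 1--3 are soft once the semigroup is in hand; the rapid decay in Step 4 is where the argument must really exploit the homogeneous group structure, and that is the crux.
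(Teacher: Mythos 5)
The paper offers no proof of this proposition: it is quoted verbatim as a known result, citing \cite[Theorem~8]{BahouriGallagher}, so there is no internal argument to compare yours against. Judged on its own terms, your outline is the standard route and is essentially sound: the bracket computation $[X_i,Y_i]=4\partial_\tau$ and H\"ormander hypoellipticity, essential self-adjointness and the contraction semigroup, the convolution-kernel representation from translation invariance, and the scaling identity $h_t(\eta)=\lambda^{Q}h_{\lambda^2 t}(\delta_\lambda\eta)$ with $\lambda=t^{-1/2}$ (which is exactly property $(vii)$ of Proposition~\ref{properties} and yields the stated self-similar form with $Q/2=n+1$) are all correct. You also correctly locate the genuine content in Step~4: that $h_1\in\mathcal{S}(\mathbb{H}^n)$ does not follow from the soft arguments and requires either Gaveau's explicit integral representation (this is the formula recorded in the paper's subsequent proposition, cited from Randall) or Hulanicki's theorem; you sketch rather than prove this, which is acceptable here since the paper itself delegates the whole statement to the literature, but be aware that is the step carrying the weight of the cited theorem.

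Two small points worth tightening. First, in Step~2 you say the semigroup "acts by right $\mathbb{H}$-convolution" yet write $e^{t\Delta_{\mathbb{H}}}u_0=h_t\ast_{_{\mathbb{H}}}u_0$ with the kernel on the left; on a noncommutative group these are different operations, and commuting with left translations naturally produces $u_0\ast_{_{\mathbb{H}}}h_t$. To match the paper's convention $u(\cdot,t)=h_t\ast_{_{\mathbb{H}}}u_0$ you should invoke the symmetry $h_t(\eta)=h_t(\eta^{-1})$ (property $(viii)$) together with unimodularity, or simply fix the convention consistently. Second, positivity and $\int_{\mathbb{H}^n}h_t\,d\eta=1$ deserve a word more than "parabolic maximum principle": for a H\"ormander sum-of-squares operator one appeals to Bony's maximum principle (or the explicit kernel formula) for positivity, and to conservativeness/stochastic completeness of the semigroup for unit mass. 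Neither point is a genuine gap in the intended argument, but both are places where the non-Euclidean structure forces you to be careful.
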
 
The following proposition follows directly from \cite[Theorem~3.1]{Folland}, \cite[Corollary~1.70]{Follandstein}, \cite[Proposition~1.68]{Follandstein}, and \cite[Corollary~2.3]{Pazy}.
\begin{proposition}\label{properties}${}$\\
There is a unique semigroup $(S_{\mathbb{H}}(t))_{t>0}$ generated by $\Delta_{\mathbb{H}}$ and satisfying the following properties:
\begin{itemize}
\item[$(i)$] $(S_{\mathbb{H}}(t))_{t>0}$ is a contraction semigroup on $L^p(\mathbb{H}^n)$, $1\leq p\leq \infty$, which is strongly continuous for $p<\infty$. 
\item[$(ii)$] $(S_{\mathbb{H}}(t))_{t>0}$  is a strongly continuous semigroup on $C_0(\mathbb{H}^n)$.
\item[$(iii)$] For every $v\in X$, where $X$ is either $L^p(\mathbb{H}^n)$ for $1\leq p< \infty$ or $C_0(\mathbb{H}^n)$, the map $t\longmapsto S(t)v$ is continuous from $[0,\infty)$ into $X$.
\item[$(iv)$] $S_{\mathbb{H}}(t)f=h_t\ast_{_{\mathbb{H}}} f$, for all $f\in L^p(\mathbb{H}^n)$, $1\leq p\leq \infty$, $t>0$.
\item[$(v)$] $\displaystyle\int_{\mathbb{H}^n}h_t(\eta)\,d\eta=1$, for all $t>0$.
\item[$(vi)$] $h_t(\eta)\geq0$, for all $\eta \in \mathbb{H}^n, t>0.$ 
\item[$(vii)$] $h_{r^2t}(rx,ry,r^2s)=r^{-Q}h_{t}(x,y,s)$, for all $r,t>0$, $(x, y,s) \in \mathbb{H}^n$. 
\item[$(viii)$] $h_t(\eta)=h_t(\eta^{-1})$, for all $\eta \in \mathbb{H}^n$, $t>0$.
\item[$(ix)$] $h_t\ast_{_{\mathbb{H}}} h_s=h_{t+s}$, for all $s,t>0$. 
\end{itemize}
\end{proposition}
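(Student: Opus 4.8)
The plan is to read Proposition \ref{properties} not as a single computation but as an assembly in two layers: first the kernel-level facts (v)--(ix) about $h_t$, and then the operator-level statements (i)--(iv) together with the existence and uniqueness of the semigroup. The backbone is Proposition \ref{Heat}, which already furnishes a heat kernel $h_t\in\mathcal{S}(\mathbb{H}^n)$ together with its explicit dilation form $h_t(x,y,s)=t^{-(n+1)}h(x/\sqrt t,\,y/\sqrt t,\,s/t)$; everything else should be deducible from this, from the self-adjointness/hypoellipticity of $\Delta_{\mathbb{H}}$, from the structure theory of sub-Laplacians on homogeneous groups (Folland and Folland--Stein), and from the Hille--Yosida machinery (Pazy).

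First I would dispose of the scaling identity (vii): since $Q=2n+2=2(n+1)$, substituting $(rx,ry,r^2s)$ and time $r^2t$ into the dilation formula of Proposition \ref{Heat} yields $h_{r^2t}(rx,ry,r^2s)=r^{-Q}h_t(x,y,s)$ by inspection. The symmetry (viii) I would derive from the formal self-adjointness of $\Delta_{\mathbb{H}}$ with respect to Lebesgue (= bi-invariant Haar) measure: because $\mathbb{H}^n$ is unimodular and each $X_i,Y_i$ is skew-adjoint, $\Delta_{\mathbb{H}}=\sum_i(X_i^2+Y_i^2)$ is symmetric, which forces its convolution kernel to satisfy $h_t(\eta)=h_t(\eta^{-1})$. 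Positivity (vi) and the mass identity (v) are the two facts that cannot be read off a closed formula, since the Euclidean Gaussian is unavailable here; for these I would invoke the general theory, taking positivity of the sub-Laplacian heat kernel from the parabolic maximum principle (equivalently the probabilistic/subordination construction) in Folland--Stein, and $\int_{\mathbb{H}^n}h_t=1$ from conservativeness, i.e. $S_{\mathbb{H}}(t)1=1$ because $\Delta_{\mathbb{H}}$ annihilates constants, or alternatively by integrating the dilation form and reducing to $\int_{\mathbb{H}^n}h_1=1$.

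With (v) and (vi) in hand we have $\|h_t\|_1=1$, and the operator-level claims become routine. Since $\Delta_{\mathbb{H}}$ is self-adjoint and nonpositive on $L^2(\mathbb{H}^n)$, the spectral theorem (or Hille--Yosida, Pazy) produces a contraction semigroup on $L^2$ whose kernel is exactly the $h_t$ of Proposition \ref{Heat}; positivity and unit mass make this semigroup Markovian. For (iv) I would start from the representation on Schwartz data and extend $S_{\mathbb{H}}(t)f=h_t\ast_{_{\mathbb{H}}}f$ by density. Young's convolution inequality on the unimodular group $\mathbb{H}^n$ then gives $\|h_t\ast_{_{\mathbb{H}}}f\|_p\le\|h_t\|_1\|f\|_p=\|f\|_p$, which yields the contraction statement (i) for every $1\le p\le\infty$; strong continuity for $p<\infty$, the Feller/$C_0$ statement (ii), and the orbit continuity (iii) then follow because $\{h_t\}_{t>0}$ is an approximate identity as $t\to0^+$, a fact that itself uses the dilation form together with $\int_{\mathbb{H}^n}h_t=1$. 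The semigroup law (ix), $h_t\ast_{_{\mathbb{H}}}h_s=h_{t+s}$, is the kernel shadow of $S_{\mathbb{H}}(t+s)=S_{\mathbb{H}}(t)S_{\mathbb{H}}(s)$ through (iv), using associativity of Heisenberg convolution. Uniqueness follows from the fact that a strongly continuous semigroup is completely determined by its generator $\Delta_{\mathbb{H}}$.

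I expect the genuine obstacle to lie in the interplay between non-commutativity and the two ``soft'' properties (v)--(vi). Because $\mathbb{H}^n$ is non-abelian, one must keep careful track of the left/right placement in $\ast_{_{\mathbb{H}}}$ (it is the left-invariance of $\Delta_{\mathbb{H}}$ that selects the correct side), and verify Young's inequality in the unimodular rather than the abelian setting. More seriously, since $h$ has no elementary closed form, positivity and unit mass must be imported from the Folland--Stein theory rather than checked by hand; making that citation precise, and in particular confirming that the kernel normalization there coincides with the semigroup $S_{\mathbb{H}}(t)$ of Proposition \ref{Heat}, is the step that will require the most care.
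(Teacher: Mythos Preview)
Your proposal is correct and aligns with the paper's own treatment: the paper does not give a proof of this proposition at all, but simply states that it ``follows directly from \cite[Theorem~3.1]{Folland}, \cite[Corollary~1.70]{Follandstein}, \cite[Proposition~1.68]{Follandstein}, and \cite[Corollary~2.3]{Pazy}.'' Your sketch is essentially an unpacking of how those four citations assemble into the nine items, and you have correctly identified the same sources (Folland, Folland--Stein, Pazy) and the same logical organization (kernel facts from the structure theory of sub-Laplacians on homogeneous groups, operator facts from Hille--Yosida plus Young's inequality).
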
 
\begin{proposition}\cite[Theorem~1.3.2]{Randall}${}$\\
The function \( h_t \) is given by
\[
h_t(\eta) = \frac{1}{(2\pi)^{n+2} 2^n} \int_{\mathbb{R}} \left(\frac{\lambda}{\sinh(t\lambda)}\right)^n
\exp\left( -\frac{|z|^2 \lambda}{4 \tanh(t\lambda)} \right) 
e^{i\lambda \tau} \, d\lambda, \quad \hbox{for all}\,\,\eta=(z, \tau) \in \mathbb{R}^{2n} \times \mathbb{R}.
\]
\end{proposition}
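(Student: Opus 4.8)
The plan is to derive the stated formula, which is the classical Gaveau--Hulanicki expression for the Heisenberg heat kernel, by reducing the free sub-Laplacian heat equation to a one-parameter family of parabolic equations on $\mathbb{R}^{2n}$ via the partial Fourier transform in the central variable $\tau$, and then solving each member of the family by Mehler's formula for the associated twisted (Landau/Hermite) Laplacian.

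First I would apply the partial Fourier transform $\mathcal{F}_\tau$, writing $\widehat{f}(x,y,\lambda)=\int_{\mathbb{R}}f(x,y,\tau)e^{-i\lambda\tau}\,d\tau$. Under $\mathcal{F}_\tau$ the central derivative $\partial_\tau$ becomes multiplication by $i\lambda$, so the left-invariant fields $X_j=\partial_{x_j}-2y_j\partial_\tau$ and $Y_j=\partial_{y_j}+2x_j\partial_\tau$ turn into the $\lambda$-dependent operators $X_j^\lambda=\partial_{x_j}-2i\lambda y_j$ and $Y_j^\lambda=\partial_{y_j}+2i\lambda x_j$ on $\mathbb{R}^{2n}$. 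Consequently the sub-Laplacian $\Delta_{\mathbb{H}}$ is conjugated by $\mathcal{F}_\tau$ to the twisted Laplacian $\mathcal{L}_\lambda=\sum_{j=1}^n\big[(X_j^\lambda)^2+(Y_j^\lambda)^2\big]$, a magnetic Schr\"odinger operator of Landau type with constant field proportional to $\lambda$. The free heat equation then decouples into the scalar family $\partial_t\widehat{u}=\mathcal{L}_\lambda\widehat{u}$, one problem for each frequency $\lambda$.

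The central computation is the heat kernel of $e^{t\mathcal{L}_\lambda}$. The partial Fourier transform turns the Heisenberg convolution into a $\lambda$-twisted convolution on $\mathbb{R}^{2n}$ (the twist $2(x\cdot y'-x'\cdot y)$ in the group law becomes the magnetic phase), so that $\widehat{h}_t(\cdot,\lambda)$ is precisely the special-Hermite/Weyl heat kernel $e^{t\mathcal{L}_\lambda}\delta_0$. Since $\mathcal{L}_\lambda$ is unitarily equivalent to a shifted harmonic oscillator, this kernel is given by Mehler's formula, producing a Gaussian in $z=(x,y)$ whose quadratic coefficient is governed by $\tanh(t\lambda)$ and whose normalizing prefactor is a power of $\lambda/\sinh(t\lambda)$: explicitly $\widehat{h}_t(z,\lambda)=c_n\big(\lambda/\sinh(t\lambda)\big)^n\exp\!\big(-|z|^2\lambda/(4\tanh(t\lambda))\big)$. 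Finally, inverting the Fourier transform in $\lambda$,
$$h_t(x,y,\tau)=\frac{1}{2\pi}\int_{\mathbb{R}}\widehat{h}_t(x,y,\lambda)\,e^{i\lambda\tau}\,d\lambda,$$
and absorbing the Mehler and Fourier normalization constants into the single prefactor $\frac{1}{(2\pi)^{n+2}2^n}$ reproduces exactly the claimed formula.

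The main obstacle I anticipate lies entirely in the twisted-Laplacian step and is twofold. First, one must correctly derive the Mehler kernel for $\mathcal{L}_\lambda$ with its imaginary magnetic cross terms and track every constant and the correct arguments (the factor $4(|x|^2+|y|^2)\partial_\tau^2$ in $\Delta_{\mathbb{H}}$ naively suggests $\sinh(4t\lambda)$, which must be reconciled with the stated $\sinh(t\lambda)$ through the Fourier normalization and a rescaling of $\lambda$); any slip here propagates into both the prefactor and the $\tanh$ coefficient. Second, and more conceptually, one must justify rigorously that $\widehat{h}_t(\cdot,\lambda)$ is the $\lambda$-twisted convolution kernel, i.e.\ that the left-invariant group convolution on $\mathbb{H}^n$ becomes the magnetic twisted convolution on $\mathbb{R}^{2n}$ under $\mathcal{F}_\tau$. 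Throughout, the normalization property $(v)$, $\int_{\mathbb{H}^n}h_t\,d\eta=1$, which forces $\widehat{h}_t(0,0,0)=1$, serves as a decisive consistency check fixing all constants.
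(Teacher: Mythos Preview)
The paper does not prove this proposition at all: it is stated with a citation to \cite[Theorem~1.3.2]{Randall} and no argument is given. There is therefore no proof in the paper to compare your proposal against.

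Your outline is the classical Gaveau--Hulanicki derivation (partial Fourier transform in the central variable, reduction to the twisted Laplacian $\mathcal{L}_\lambda$, Mehler's formula, inverse Fourier transform), and the strategy is correct in spirit. You have also correctly identified the genuine difficulties: the bookkeeping of constants---in particular reconciling the coefficient $4(|x|^2+|y|^2)\partial_\tau^2$ in \eqref{40} with the arguments $\sinh(t\lambda)$ and $\tanh(t\lambda)$ appearing in the stated formula---and the justification that the group convolution becomes the $\lambda$-twisted convolution under $\mathcal{F}_\tau$. As written, your proposal is a plan rather than a proof: the Mehler computation and the constant-tracking are asserted rather than carried out, and the final prefactor $\frac{1}{(2\pi)^{n+2}2^n}$ is simply declared to be what one obtains after ``absorbing'' constants. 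If you want to turn this into an actual proof you would need to execute these steps explicitly; but since the paper itself treats the formula as a quoted result, your level of detail already exceeds what the paper provides.
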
 
\begin{proposition}\label{pointwiseupperbound} \cite[Theorems~2,4]{Jerison} or \cite[Theorems~IV.4.2-4.3]{Varapoulos}${}$\\
Let $h_t$ be the heat kernel associated to $-\Delta_{\mathbb{H}}$. Then there exist two positive constants $c_\star, C_\star$ such that the kernel can be estimated as follows:
\[
c_\star t^{-\frac{Q}{2}} \exp\left( -\frac{C_\star |\eta|_H^2}{t} \right) 
\leq h_t(\eta) 
\leq C_\star t^{-\frac{Q}{2}} \exp\left( -\frac{c_\star |\eta|_H^2}{t} \right) 
\]
for any $t > 0$ and $\eta \in \mathbb{H}^n$.
\end{proposition}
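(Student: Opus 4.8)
This is the classical two-sided Gaussian (Aronson-type) estimate for the heat kernel of a sub-Laplacian, and the paper rightly attributes it to Jerison--S\'anchez-Calle and to Varopoulos--Saloff-Coste--Coulhon; here I sketch the route I would follow. The facts about $\mathbb{H}^n$ that drive everything are: (a) the volume growth $|B_{\mathbb{H}}(\eta,r)|\simeq r^Q$, immediate from left-invariance of the Haar (Lebesgue) measure and the dilation $\delta_\lambda$ of Jacobian $\lambda^Q$; (b) the Folland--Stein Sobolev inequality $\|f\|_{2Q/(Q-2)}\lesssim\|\nabla_{\mathbb{H}}f\|_{2}$, valid since $Q=2n+2\ge 4$; and (c) the scaled $L^2$ Poincar\'e inequality on Kor\'anyi balls. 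I also use that $(S_{\mathbb{H}}(t))_{t>0}$ is a symmetric Markov semigroup (each $X_i,Y_i$ is divergence-free, so $-\Delta_{\mathbb{H}}\ge 0$ is self-adjoint, while $h_t\ge 0$, $\int h_t=1$ and $h_t(\eta)=h_t(\eta^{-1})$ by Proposition~\ref{properties}), together with the scaling $h_{r^2t}(\delta_r\eta)=r^{-Q}h_t(\eta)$. Finally, the Kor\'anyi norm $|\cdot|_{\mathbb{H}}$ and the Carnot--Carath\'eodory distance $d_{cc}(\cdot,0)$ are both continuous and $\delta_\lambda$-homogeneous of degree one, hence comparable by compactness of $\{|\eta|_{\mathbb{H}}=1\}$; this lets me phrase every estimate in terms of $|\cdot|_{\mathbb{H}}$.

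\emph{On-diagonal bounds and the Gaussian upper bound.} From (b) one derives the Nash inequality $\|f\|_2^{2+4/Q}\lesssim\|\nabla_{\mathbb{H}}f\|_2^2\,\|f\|_1^{4/Q}$, and Nash's iteration applied to $\tfrac{d}{dt}\|S_{\mathbb{H}}(t)f\|_2^2=-2\|\nabla_{\mathbb{H}}S_{\mathbb{H}}(t)f\|_2^2$ gives the ultracontractive bound $\|S_{\mathbb{H}}(t)\|_{L^1\to L^\infty}\lesssim t^{-Q/2}$, i.e.\ $0\le h_t(\eta)\le C_\star t^{-Q/2}$. For the Gaussian factor I would run Davies' exponential perturbation: for a bounded Lipschitz $\psi$ with $|\nabla_{\mathbb{H}}\psi|\le1$ and $\alpha\in\mathbb{R}$, the energy estimate for the twisted semigroup yields $\|e^{\alpha\psi}S_{\mathbb{H}}(t)e^{-\alpha\psi}\|_{L^2\to L^2}\le e^{\alpha^2t}$, and Davies' lemma combines this with ultracontractivity to give $\|e^{\alpha\psi}S_{\mathbb{H}}(t)e^{-\alpha\psi}\|_{L^1\to L^\infty}\lesssim t^{-Q/2}e^{c\alpha^2t}$, that is $e^{\alpha(\psi(\eta)-\psi(\xi))}h_t(\xi^{-1}\circ\eta)\lesssim t^{-Q/2}e^{c\alpha^2t}$. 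Choosing $\psi=\min\{d_{cc}(\cdot,\xi),d_{cc}(\eta,\xi)\}$ and optimizing in $\alpha$ gives $h_t(\xi^{-1}\circ\eta)\lesssim t^{-Q/2}\exp\!\big(-d_{cc}(\eta,\xi)^2/(4ct)\big)$, and the loss in the exponent's constant is absorbed by one more use of $h_t=h_{t/2}\ast_{\mathbb{H}}h_{t/2}$; with $d_{cc}\simeq|\cdot|_{\mathbb{H}}$ this is $h_t(\eta)\le C_\star t^{-Q/2}\exp(-c_\star|\eta|_{\mathbb{H}}^2/t)$. As a byproduct, $h_{2t}(0)=\|h_t\|_2^2\ge|B_{\mathbb{H}}(0,A\sqrt t)|^{-1}\big(\int_{B_{\mathbb{H}}(0,A\sqrt t)}h_t\big)^2\gtrsim t^{-Q/2}$ once $A$ is fixed so large that the Gaussian tail forces $\int_{B_{\mathbb{H}}(0,A\sqrt t)}h_t\ge\tfrac12$, whence $h_t(0)\simeq t^{-Q/2}$.

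\emph{The Gaussian lower bound.} Here I would invoke the parabolic Harnack inequality for $\partial_t-\Delta_{\mathbb{H}}$ on cylinders $B_{\mathbb{H}}(\eta_0,r)\times(0,r^2)$, which follows from (a) and (c) by the Grigor'yan--Saloff-Coste equivalence. Given $\eta$ and $t$, choose a near-geodesic chain $0=\zeta_0,\dots,\zeta_N=\eta$ with consecutive points at $d_{cc}$-distance $\lesssim\sqrt{t/N}$ and $N\simeq1+|\eta|_{\mathbb{H}}^2/t$, and times $t/2=s_0<s_1<\dots<s_N=t$ with $s_i-s_{i-1}\simeq t/N$; applying Harnack to the caloric function $(s,\zeta)\mapsto h_s(\zeta)$ along the chain gives $h_{t/2}(0)\lesssim C^N h_t(\eta)$. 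Hence $h_t(\eta)\gtrsim C^{-N}h_{t/2}(0)\gtrsim e^{-CN}t^{-Q/2}\gtrsim t^{-Q/2}\exp(-C_\star|\eta|_{\mathbb{H}}^2/t)$, using the diagonal lower bound from the previous step and $N\lesssim 1+|\eta|_{\mathbb{H}}^2/t$. After relabeling $t$ this is the stated lower bound.

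\textbf{Main obstacle.} The one genuinely deep ingredient — and the one the paper cites rather than proves — is the scaled $L^2$ Poincar\'e inequality on Kor\'anyi (equivalently Carnot--Carath\'eodory) balls for the H\"ormander sum of squares $\Delta_{\mathbb{H}}$, equivalently the parabolic Harnack inequality used in the lower bound and underlying the sharpness of Davies' method; this is the subelliptic core of Jerison's work, and without it the chaining argument has nothing to run on. The remaining technical points — the bi-Lipschitz comparison $d_{cc}\simeq|\cdot|_{\mathbb{H}}$, so that all Gaussian factors can be rewritten with the Kor\'anyi norm, and the tracking of the exponential-constant loss through Davies' perturbation — are routine once (a)--(c) are in hand.
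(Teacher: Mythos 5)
The paper does not prove this proposition at all: it is quoted verbatim from the cited references (Jerison--S\'anchez-Calle and Varopoulos--Saloff-Coste--Coulhon), so there is no internal argument to compare yours against. Your sketch is a correct outline of the standard proof in those references: Folland--Stein Sobolev $\Rightarrow$ Nash inequality $\Rightarrow$ ultracontractivity $\|S_{\mathbb{H}}(t)\|_{L^1\to L^\infty}\lesssim t^{-Q/2}$, Davies' exponential perturbation with a $d_{cc}$-Lipschitz weight for the Gaussian upper bound, the Cauchy--Schwarz/on-diagonal lower bound $h_{2t}(0)=\|h_t\|_2^2\gtrsim t^{-Q/2}$, and the parabolic Harnack inequality plus chaining for the off-diagonal lower bound; you also correctly identify the genuinely deep input (Jerison's scaled Poincar\'e inequality, equivalently the Harnack inequality via the Grigor'yan--Saloff-Coste characterization), which is precisely what the paper's citations supply, and the routine comparability $d_{cc}\simeq|\cdot|_{\mathbb{H}}$ by homogeneity and compactness of the unit sphere. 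Two small remarks: on $\mathbb{H}^n$ the scaling $h_t(\eta)=t^{-Q/2}h_1\big(\delta_{1/\sqrt{t}}(\eta)\big)$ (Proposition~\ref{Heat}, since $n+1=Q/2$) reduces the whole statement to two-sided bounds for $h_1$ alone and gives $h_t(0)\simeq t^{-Q/2}$ for free, so your Nash/ultracontractive machinery, while correct, is more than is needed for the on-diagonal behavior (though the hard off-diagonal tails, especially the lower one, still require Davies and Harnack chaining as you describe); and in the chaining step the number of balls should be taken as $N\simeq 1+d_{cc}(0,\eta)^2/t$ with the comparison to $|\eta|_{\mathbb{H}}$ invoked only at the end, exactly as you do. In short, the proposal is sound and matches the route of the cited literature rather than anything in the paper itself.
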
 
\begin{proposition}\label{pointwiseheatestimation}\cite[Theorem~1]{Jerison} or \cite[Theorem~IV.4.2]{Varapoulos}${}$\\
Let $h_t$ be the heat kernel associated to $-\Delta_{\mathbb{H}}$. Then there exist positive constants $c_1$ and $C_{I,l}$ depending $-\Delta_{\mathbb{H}}$ such that
$$|\partial_t^lX_I h_t(\eta)|\leq C_{I,l}t^{-l-\frac{|I|}{2}-\frac{Q}{2}}e^{-\frac{c_1|\eta|^2_{_{\mathbb{H}}}}{t}},\qquad\hbox{for all}\,\,\,\eta\in\mathbb{H}^n,\,\,t>0,$$
where $I=(i_1,\dots,i_m)$ with $|I|=m$ and $X_I=X_{i_1}X_{i_2}\dots X_{i_m}$.
\end{proposition}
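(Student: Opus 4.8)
The plan is to split the estimate into two independent pieces: a \emph{scaling reduction} that produces the exact powers of $t$ and collapses the problem to a single fixed time, and a \emph{Gaussian off-diagonal bound for the derivatives of the fixed kernel $h_1=h$}, which carries the genuine analytic content.

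First I would exploit the dilation structure. A direct computation shows that each field is homogeneous of degree one, $X_i(f\circ\delta_\lambda)=\lambda\,(X_if)\circ\delta_\lambda$ (and similarly for $Y_i$), so $\Delta_{\mathbb{H}}$ is homogeneous of degree two. Rewriting property $(vii)$ as $h_t(\eta)=t^{-Q/2}h_1(\delta_{t^{-1/2}}\eta)$, and noting that $h_t$ solves the free heat equation so that $\partial_t^l h_t=\Delta_{\mathbb{H}}^l h_t$ and hence $\partial_t^lX_I h_t=X_I\Delta_{\mathbb{H}}^l h_t$ is a sum of products of $|I|+2l$ left-invariant fields, the degree count gives
\begin{equation*}
\partial_t^lX_I h_t(\eta)=t^{-\frac{Q}{2}-\frac{|I|}{2}-l}\,\bigl(X_I\Delta_{\mathbb{H}}^l h_1\bigr)\!\left(\delta_{t^{-1/2}}\eta\right).
\end{equation*}
This already produces the claimed power $t^{-l-|I|/2-Q/2}$, so it remains only to bound the fixed-time factor on the right.

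Next I would establish the fixed-time Gaussian bound $\bigl|(X_I\Delta_{\mathbb{H}}^l h_1)(\xi)\bigr|\leq C\,e^{-c|\xi|_{_{\mathbb{H}}}^2}$. Granting this, inserting $\xi=\delta_{t^{-1/2}}\eta$ and using the homogeneity of the Kor\'anyi norm, $|\delta_{t^{-1/2}}\eta|_{_{\mathbb{H}}}^2=t^{-1}|\eta|_{_{\mathbb{H}}}^2$, converts $e^{-c|\xi|_{_{\mathbb{H}}}^2}$ into $e^{-c_1|\eta|_{_{\mathbb{H}}}^2/t}$ and finishes the proof. Note that Proposition~\ref{Heat} gives $h_1=h\in\mathcal{S}(\mathbb{H}^n)$, so its derivatives decay faster than any polynomial, but this falls short of the sharp Gaussian tail; the extra input is the hypoellipticity of $\partial_t-\Delta_{\mathbb{H}}$ (the bracket $[X_i,Y_i]=4\partial_\tau$ supplies the missing central direction, so H\"ormander's condition holds) combined with interior subelliptic parabolic regularity of Rothschild--Stein/Folland type: for a solution $u$ of the heat equation on a parabolic cylinder, the derivative $\partial_t^lX_I u$ at the center is controlled by $\sup u$ over the cylinder. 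Applying this with $u=h_t$ on a unit-scale cylinder centered at $(\xi,1)$ and then invoking the kernel bound of Proposition~\ref{pointwiseupperbound}, which on that cylinder is dominated by $Ce^{-c|\xi|_{_{\mathbb{H}}}^2}$ since every point there has Kor\'anyi norm comparable to $|\xi|_{_{\mathbb{H}}}$ and time comparable to $1$, yields precisely the required scale-one derivative bound.

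I expect this last regularity step to be the main obstacle: because $X_i,Y_i$ do not span the tangent space pointwise, classical parabolic Schauder and $L^2$ estimates do not apply, and one must genuinely invoke subelliptic a priori estimates together with a sub-Riemannian Sobolev embedding (sufficiently many subelliptic derivatives control $\|\cdot\|_\infty$) to pass from integral to pointwise control, with care that the constants remain scale-invariant at $t=1$. A more computational alternative would be to differentiate the explicit integral representation of $h_t$ recorded above under the integral sign; this transparently produces the decay in the horizontal variable $z$, but extracting the full Gaussian decay in the Kor\'anyi norm, especially in the central direction $\tau$ where the oscillatory factor $e^{i\lambda\tau}$ forces a stationary-phase or contour-shift argument, is technically heavier. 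I would therefore carry out the hypoellipticity route and keep the explicit formula only as a cross-check.
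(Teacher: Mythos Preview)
The paper does not supply its own proof of this proposition: it is quoted verbatim as a known result, with references to Jerison--S\'anchez-Calle and to Varopoulos--Saloff-Coste--Coulhon, and the text moves on immediately to consequences. So there is nothing in the paper to compare against beyond the citations themselves.

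Your outline is a faithful reconstruction of the standard argument in those references. The scaling reduction is exactly right: homogeneity of the horizontal fields under $\delta_\lambda$ together with $h_t(\eta)=t^{-Q/2}h(\delta_{t^{-1/2}}\eta)$ and $\partial_t h_t=\Delta_{\mathbb H}h_t$ produces the power $t^{-l-|I|/2-Q/2}$ and reduces everything to a bound on $X_I\Delta_{\mathbb H}^l h$ at $t=1$. For the fixed-time Gaussian tail you correctly identify that Schwartz decay of $h$ is insufficient and that the genuine input is a local subelliptic parabolic regularity estimate (H\"ormander's condition holds via $[X_i,Y_i]=4\partial_\tau$), applied on a small parabolic cylinder around $(\xi,1)$ and fed with the pointwise upper bound of Proposition~\ref{pointwiseupperbound}; this is precisely the mechanism in \cite{Jerison}. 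Your caveat about the alternative route through the explicit oscillatory integral---easy in the $z$-variable, delicate in $\tau$ because the decay comes from stationary phase/contour deformation---is also accurate. In short, your proposal is correct and matches the approach of the sources the paper cites; the paper itself simply invokes the result.
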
 
The following result is a direct consequence of Proposition \ref{pointwiseheatestimation}.
\begin{lemma}\label{estimateheatkernel}
Let $h_t$ be the heat kernel associated to $-\Delta_{\mathbb{H}}$. Then there exist positive constants $C_1$ and $C_2$ depending $-\Delta_{\mathbb{H}}$ such that
$$\left\|\nabla_{\mathbb{H}} h_t(\cdotp)\right\|_1\leq C t^{-\frac{1}{2}},\qquad \left\|T h_t(\cdotp)\right\|_1\leq C t^{-\frac{1}{2}},\qquad t>0,$$
where $\nabla_{\mathbb{H}}$ is defined by \eqref{48} and $T=\partial_\tau$.
\end{lemma}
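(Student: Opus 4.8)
The plan is to obtain both norms from the single pointwise Gaussian-type derivative bound of Proposition~\ref{pointwiseheatestimation}, combined with the exact self-similarity $h_{r^2t}\circ\delta_r=r^{-Q}h_t$ from property $(vii)$ of Proposition~\ref{properties}. The organizing principle is that under the dilations $\delta_\lambda$ every horizontal field $X_i,Y_i$ is homogeneous of degree one whereas $T=\partial_\tau$ is homogeneous of degree two; this fixes the exact power of $t$ in each $L^1$-norm, while Proposition~\ref{pointwiseheatestimation} guarantees the corresponding norm at $t=1$ is finite.

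For the gradient I would first note that each component $X_ih_t$ (and $Y_ih_t$) is a first-order horizontal derivative, so Proposition~\ref{pointwiseheatestimation} with $l=0$, $|I|=1$ gives $|X_ih_t(\eta)|\le C\,t^{-\frac12-\frac Q2}e^{-c_1|\eta|_{\mathbb H}^2/t}$. Integrating over $\mathbb H^n$ and using the dilation change of variables $\eta=\delta_{\sqrt t}\xi$, under which $|\eta|_{\mathbb H}=\sqrt t\,|\xi|_{\mathbb H}$ and $d\eta=t^{Q/2}\,d\xi$, reduces the integral to $t^{Q/2}\int_{\mathbb H^n}e^{-c_1|\xi|_{\mathbb H}^2}\,d\xi$. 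This integral is finite, since with $\rho^2=|x|^2+|y|^2$ one has $|\xi|_{\mathbb H}^2=\sqrt{\rho^4+\tau^2}$, which grows like $\rho^2$ and like $|\tau|$ at infinity and hence yields Gaussian decay in $\rho$ and exponential decay in $\tau$. Thus $\|X_ih_t\|_1\le C\,t^{-1/2}$ for all $t>0$, and summing the $2n$ components gives $\|\nabla_{\mathbb H}h_t\|_1\le C\,t^{-1/2}$. Alternatively, the homogeneity identity $(X_ih_{r^2t})\circ\delta_r=r^{-Q-1}X_ih_t$ gives $\|X_ih_t\|_1=t^{-1/2}\|X_ih_1\|_1$ at once, which also shows the rate is sharp.

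For the vertical derivative the structural input I would use is the commutator relation $[X_i,Y_i]=4\partial_\tau=4T$ (immediate from the definitions of $X_i,Y_i$), so that $Th_t=\tfrac14(X_iY_i-Y_iX_i)h_t$ is a sum of second-order horizontal derivatives. Proposition~\ref{pointwiseheatestimation} with $l=0$, $|I|=2$ then gives $|Th_t(\eta)|\le C\,t^{-1-\frac Q2}e^{-c_1|\eta|_{\mathbb H}^2/t}$, and the same dilation change of variables produces $\|Th_t\|_1\le C\,t^{-1}$ for all $t>0$; moreover the scaling identity $\|Th_t\|_1=t^{-1}\|Th_1\|_1$ shows that $t^{-1}$ is the exact rate.

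The hard part is reconciling this with the $t^{-1/2}$ exponent in the statement. Because $T$ has homogeneity degree two, the identity $\|Th_t\|_1=t^{-1}\|Th_1\|_1$ with $\|Th_1\|_1>0$ forces $\|Th_t\|_1$ to blow up like $t^{-1}$ as $t\to0^+$, which is strictly larger than $t^{-1/2}$; so no argument can deliver $\|Th_t\|_1\le C\,t^{-1/2}$ on all of $(0,\infty)$, and the sharp correct bound is $\|Th_t\|_1\le C\,t^{-1}$. The stated $t^{-1/2}$ bound does hold on the range $t\ge1$, where $t^{-1}\le t^{-1/2}$, and this is the regime used in the large-time analysis of the mass below; I would therefore record the estimate as $\|Th_t\|_1\le C\,t^{-1}$ (equivalently $\le C\,t^{-1/2}$ for $t\ge1$), which is exactly what the subsequent arguments require.
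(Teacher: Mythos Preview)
Your approach is precisely the one the paper has in mind: the paper offers no detailed argument but simply declares the lemma ``a direct consequence of Proposition~\ref{pointwiseheatestimation}'', i.e.\ pointwise Gaussian bounds on horizontal derivatives of $h_t$ integrated after the dilation change of variables $\eta=\delta_{\sqrt t}\xi$. Your treatment of $\|\nabla_{\mathbb H}h_t\|_1$ matches this exactly and is correct.

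More interestingly, you have spotted a genuine slip in the statement of the lemma. Since $T=\partial_\tau=\tfrac14[X_i,Y_i]$ is homogeneous of degree two (equivalently, a second-order horizontal operator), the scaling identity $h_t(\eta)=t^{-Q/2}h_1(\delta_{1/\sqrt t}\eta)$ forces $\|Th_t\|_1=t^{-1}\|Th_1\|_1$ exactly, so the claimed bound $\|Th_t\|_1\le Ct^{-1/2}$ cannot hold for all $t>0$; the correct global estimate is $\|Th_t\|_1\le Ct^{-1}$. Your diagnosis via the commutator and Proposition~\ref{pointwiseheatestimation} with $|I|=2$ is the right one.

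This does not damage the rest of the paper. The only place the lemma is invoked is in the proof of Lemma~\ref{Taylor}, where the $\partial_\tau$-term contributes $\int_{\mathbb H^n}|\tau|\,|g(\xi)|\,d\xi$ multiplied by $\|Th_t\|_1$; with the corrected exponent this yields a term of order $t^{-1}\|\,|\eta|_{\mathbb H}^2\,g\|_1$ rather than $t^{-1/2}\|\,|\eta|_{\mathbb H}^2\,g\|_1$, which still tends to zero and hence still gives \eqref{TaylorInequality1}. In fact the corrected version slightly improves \eqref{TaylorInequality2} to
\[
\|h_t\ast_{\mathbb H}g-M_gh_t\|_1\le C\,t^{-1/2}\|\,|\eta|_{\mathbb H}\,g\|_1+C\,t^{-1}\|\,|\eta|_{\mathbb H}^2\,g\|_1,
\]
so your observation is a correction, not a gap in your argument.
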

\begin{lemma}\label{Younginequality}\cite[Proposition~1.8]{Follandstein}${}$\\
Suppose $1\leq p,q,r\leq \infty$ and $\frac{1}{p}+\frac{1}{q}=1+\frac{1}{r}$. If $f\in L^p(\mathbb{H}^n)$ and $g\in L^q(\mathbb{H}^n)$, then their convolution $f\ast_{_{\mathbb{H}}}g$ belongs to $L^r(\mathbb{H}^n)$, and the following inequality holds
$$\|f\ast_{_{\mathbb{H}}}g\|_r\leq \|f\|_p\|g\|_q,$$
which is known as Young's inequality.
\end{lemma}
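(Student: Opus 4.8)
The plan is to prove the inequality by the classical three-exponent Hölder argument, with the only group-specific input being the invariance properties of the Haar measure of $\mathbb{H}^n$. Since $|f\ast_{_{\mathbb{H}}}g|\le |f|\ast_{_{\mathbb{H}}}|g|$ pointwise, we may assume $f,g\ge 0$. The two structural facts I would record first are: (i) Lebesgue measure $d\eta$ on $\mathbb{R}^{2n+1}$ is a bi-invariant Haar measure for $\mathbb{H}^n$, because for fixed $\eta_0$ the left and right translations $\eta\mapsto\eta_0\circ\eta$ and $\eta\mapsto\eta\circ\eta_0$ are affine maps of unit Jacobian; and (ii) the inversion $\eta\mapsto\eta^{-1}=-\eta$ preserves $d\eta$. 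In particular $\mathbb{H}^n$ is unimodular, which is exactly what legitimizes the change of variables $\xi\mapsto\eta\circ\xi^{-1}$ (inversion followed by a left translation) and $\eta\mapsto\eta\circ\xi^{-1}$ (a right translation) inside the integrals below.

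For the generic range $1\le p,q,r<\infty$ I would start from $(f\ast_{_{\mathbb{H}}}g)(\eta)=\int_{\mathbb{H}^n}f(\eta\circ\xi^{-1})g(\xi)\,d\xi$ and split the integrand into three factors
$$f(\eta\circ\xi^{-1})g(\xi)=\Big(f(\eta\circ\xi^{-1})^p\, g(\xi)^q\Big)^{1/r}\,f(\eta\circ\xi^{-1})^{\frac{r-p}{r}}\,g(\xi)^{\frac{r-q}{r}},$$
then apply Hölder's inequality with the three exponents $r$, $\frac{pr}{r-p}$, $\frac{qr}{r-q}$. The relation $\frac1p+\frac1q=1+\frac1r$ forces $r\ge p$ and $r\ge q$, so these exponents are well defined and $\ge 1$, and a direct computation gives $\frac1r+\frac{r-p}{pr}+\frac{r-q}{qr}=1$, so Hölder applies. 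Using the measure invariance from the first step to evaluate the second and third integrals as $\|f\|_p^p$ and $\|g\|_q^q$ raised to the appropriate exponents, this yields
$$(f\ast_{_{\mathbb{H}}}g)(\eta)\le \|f\|_p^{\frac{r-p}{r}}\,\|g\|_q^{\frac{r-q}{r}}\Big(\int_{\mathbb{H}^n}f(\eta\circ\xi^{-1})^p\, g(\xi)^q\,d\xi\Big)^{1/r}.$$

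To conclude I would raise to the power $r$, integrate in $\eta$, and apply Fubini's theorem to the remaining double integral. For each fixed $\xi$ the inner integral $\int_{\mathbb{H}^n}f(\eta\circ\xi^{-1})^p\,d\eta$ equals $\|f\|_p^p$ by right-translation invariance, so the double integral collapses to $\|f\|_p^p\|g\|_q^q$, giving $\|f\ast_{_{\mathbb{H}}}g\|_r^r\le \|f\|_p^r\|g\|_q^r$. The degenerate cases are handled separately and more easily: when $r=\infty$ (so $\frac1p+\frac1q=1$) the bound is immediate from Hölder's inequality together with translation invariance, and when $p=1$ (so $r=q$) or $q=1$ (so $r=p$) it follows from Minkowski's integral inequality applied to $\int_{\mathbb{H}^n}f(\xi)g(\xi^{-1}\circ\eta)\,d\xi$.

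The main obstacle, and the only place the Heisenberg structure genuinely enters, is the justification of the measure manipulations: that $\xi\mapsto\eta\circ\xi^{-1}$ and $\eta\mapsto\eta\circ\xi^{-1}$ preserve $d\eta$ and that the resulting change of variables is legitimate under Fubini. Once the unimodularity of $\mathbb{H}^n$ and the explicit form $\eta^{-1}=-\eta$ are in hand, the algebraic Hölder bookkeeping is identical to the Euclidean proof; care is only needed to confirm the exponent inequalities $1\le \frac{pr}{r-p},\ \frac{qr}{r-q}$ and to isolate the boundary cases so that no exponent becomes singular.
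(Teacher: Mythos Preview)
Your argument is correct: the three-factor H\"older splitting with exponents $r$, $\frac{pr}{r-p}$, $\frac{qr}{r-q}$ is the standard proof of Young's inequality on a unimodular group, and your verification that Lebesgue measure on $\mathbb{R}^{2n+1}$ is bi-invariant under the Heisenberg group law (affine translations of unit Jacobian) and preserved by inversion $\eta\mapsto-\eta$ is exactly what is needed to make the changes of variable go through. The exponent bookkeeping and the treatment of the boundary cases $r=\infty$, $p=1$, $q=1$ are also fine.

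The paper, however, does not prove this lemma at all: it is stated with a citation to \cite[Proposition~1.8]{Follandstein} and used as a black box. So rather than taking a different route, you have supplied a proof where the paper gives none. What you have written is essentially the argument one finds in Folland--Stein (which proves it for general homogeneous groups), specialized here to $\mathbb{H}^n$ where unimodularity is immediate from the explicit group law.
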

Thus, by applying Young's inequality (Lemma \ref{Younginequality}) for convolutions and using Propositon \ref{pointwiseheatestimation} along with the homogeneity property 
$$|(\lambda x,\lambda y,\lambda^2 \tau)|_{_{\mathbb{H}}}=\lambda |\eta|_{_{\mathbb{H}}},\quad \hbox{for}\,\, \eta=(x,y,\tau)\in\mathbb{H}^n,\,\,\lambda>0,$$ 
we obtain the following estimate.
\begin{lemma}\label{Lp-Lqestimate}($L^p-L^q$ estimate)${}$\\
 Suppose $1\leq p\leq q\leq \infty$. Then there exists a positive constant $C$ such that for every $f\in L^p(\mathbb{H}^n)$, the following inequality holds:
$$\|S_{\mathbb{H}}(t)f\|_q\leq C\,t^{-\frac{Q}{2}(\frac{1}{p}-\frac{1}{q})}\|f\|_p,\qquad t>0.$$
\end{lemma}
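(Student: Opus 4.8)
The plan is to reduce the $L^p$--$L^q$ estimate to a single bound on the $L^r$-norm of the heat kernel and then to obtain that bound from the parabolic scaling of $h_t$. First, by property $(iv)$ of Proposition \ref{properties} one has $S_{\mathbb{H}}(t)f = h_t\ast_{_{\mathbb{H}}}f$, so it suffices to control this convolution. I would fix $r\in[1,\infty]$ by $\frac1r = 1-\frac1p+\frac1q$; the assumption $p\le q$ forces $r\ge1$, while $\frac1p\le1$ forces $\frac1r\le1$, so this choice is admissible and satisfies the Young exponent relation $\frac1p+\frac1r = 1+\frac1q$. Young's inequality on $\mathbb{H}^n$ (Lemma \ref{Younginequality}) then gives $\|S_{\mathbb{H}}(t)f\|_q = \|h_t\ast_{_{\mathbb{H}}}f\|_q\le\|h_t\|_r\|f\|_p$, and since $1-\frac1r = \frac1p-\frac1q$, the statement reduces to proving $\|h_t\|_r\le C\,t^{-\frac{Q}{2}(1-\frac1r)}$ for all $t>0$.

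For the kernel bound I would start from the pointwise Gaussian estimate obtained by taking $l=0$ and $|I|=0$ in Proposition \ref{pointwiseheatestimation} (equivalently, the upper inequality of Proposition \ref{pointwiseupperbound}), namely $0\le h_t(\eta)\le C\,t^{-Q/2}e^{-c_1|\eta|_{_{\mathbb{H}}}^2/t}$. Raising to the power $r$ (for $r<\infty$) and integrating gives $\|h_t\|_r^r\le C^r t^{-rQ/2}\int_{\mathbb{H}^n}e^{-rc_1|\eta|_{_{\mathbb{H}}}^2/t}\,d\eta$. The anisotropic change of variables $\eta=\delta_{\sqrt t}(\xi)=(\sqrt t\,x',\sqrt t\,y',t\,\tau')$ has Jacobian $t^{Q/2}$ and, by the homogeneity $|\delta_{\sqrt t}(\xi)|_{_{\mathbb{H}}}=\sqrt t\,|\xi|_{_{\mathbb{H}}}$ of the Kor\'anyi norm, transforms this integral into $t^{Q/2}\int_{\mathbb{H}^n}e^{-rc_1|\xi|_{_{\mathbb{H}}}^2}\,d\xi$. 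Since the Kor\'anyi norm dominates a genuine homogeneous norm on $\mathbb{R}^{2n+1}$, the weight $e^{-rc_1|\xi|_{_{\mathbb{H}}}^2}$ is integrable and the last integral is a finite constant independent of $t$; hence $\|h_t\|_r\le C\,t^{-Q/2+Q/(2r)} = C\,t^{-\frac{Q}{2}(1-\frac1r)}$, as required. The endpoint $r=\infty$ (i.e. $p=1$, $q=\infty$) follows directly from the pointwise bound, and $r=1$ (i.e. $p=q$) from properties $(v)$ and $(vi)$; both are consistent with the contraction property $(i)$.

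The argument is essentially routine, and the only steps that call for any care are the validity of the dilation change of variables and the convergence of the dimensionless integral $\int_{\mathbb{H}^n}e^{-c|\xi|_{_{\mathbb{H}}}^2}\,d\xi$. If one wishes to sidestep the pointwise bound altogether, Proposition \ref{Heat} gives $h_1\in\mathcal{S}(\mathbb{H}^n)$ together with the scaling identity $h_t(\eta)=t^{-Q/2}h_1(\delta_{1/\sqrt t}(\eta))$ (which is property $(vii)$ rewritten), and then the same change of variables yields $\|h_t\|_r = t^{-\frac{Q}{2}(1-\frac1r)}\|h_1\|_r$ with $\|h_1\|_r<\infty$ for every $r\in[1,\infty]$, which closes the proof at once.
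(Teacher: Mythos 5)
Your argument is correct and follows essentially the same route the paper indicates: reduce via Young's inequality on $\mathbb{H}^n$ (Lemma \ref{Younginequality}) to a bound on $\|h_t\|_r$, then obtain $\|h_t\|_r\lesssim t^{-\frac{Q}{2}(1-\frac{1}{r})}$ from the pointwise Gaussian estimate of Proposition \ref{pointwiseheatestimation} together with the homogeneity of the Kor\'anyi norm and the dilation change of variables with Jacobian $t^{Q/2}$. Your closing remark that one could instead invoke $h\in\mathcal{S}(\mathbb{H}^n)$ and the scaling property $(vii)$ to get $\|h_t\|_r=t^{-\frac{Q}{2}(1-\frac{1}{r})}\|h_1\|_r$ is a fine equivalent shortcut, but no new idea is needed.
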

To express the zero-order Taylor expansion with integral remainder on the Heisenberg group in a straightforward and comprehensible manner, we provide it here along with a detailed proof.
\begin{lemma}\label{Taylorexpansion}
Let $f:\mathbb{H}^n\rightarrow \mathbb{R}$ be a smooth function. For any point $\eta=(x_0,y_0,\tau_0)\in \mathbb{H}^n$ and a small perturbation $\xi=(x,y,\tau)\in \mathbb{H}^n$, the zero-order Taylor expansion of 
$f(\eta\circ\xi)$ around $\eta$ with an integral remainder is given by
$$f(\eta\circ\xi)=f(\eta)+\int_0^1(x,y,2s \tau)^{T}\cdot\widetilde{\nabla}_{\mathbb{H}}f(\eta\circ\delta_s(\xi))\,ds,$$
where $\widetilde{\nabla}_{\mathbb{H}}=(\nabla_{\mathbb{H}},\partial_\tau)^{T}$.
\end{lemma}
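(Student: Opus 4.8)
The natural strategy is to realise the left-hand side as the endpoint value of a one–parameter family and to invoke the ordinary fundamental theorem of calculus. The plan is to set $g(s):=f\big(\eta\circ\delta_s(\xi)\big)$ for $s\in[0,1]$. Since $\delta_0(\xi)=(0,0,0)$ is the group identity we have $g(0)=f(\eta)$, while $\delta_1(\xi)=\xi$ gives $g(1)=f(\eta\circ\xi)$; moreover $g$ is smooth because $s\mapsto\eta\circ\delta_s(\xi)$ is polynomial in $s$ and $f$ is smooth. Hence $f(\eta\circ\xi)-f(\eta)=\int_0^1 g'(s)\,ds$, and the entire content of the lemma reduces to the pointwise identity $g'(s)=(x,y,2s\tau)^{T}\cdot\widetilde{\nabla}_{\mathbb{H}}f(\eta\circ\delta_s(\xi))$, which I would establish by a direct chain-rule computation.

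First I would write the curve out explicitly. With $\eta=(x_0,y_0,\tau_0)$ and $\delta_s(\xi)=(sx,sy,s^2\tau)$, the group law gives
$$\eta\circ\delta_s(\xi)=\Big(x_0+sx,\ y_0+sy,\ \tau_0+s^2\tau+2s\,(x_0\cdot y-x\cdot y_0)\Big).$$
Differentiating each Euclidean coordinate in $s$ and applying the standard chain rule on $\mathbb{R}^{2n+1}$, one obtains
$$g'(s)=x\cdot\nabla_x f+y\cdot\nabla_y f+\big(2s\tau+2(x_0\cdot y-x\cdot y_0)\big)\,\partial_\tau f,$$
where every derivative of $f$ is evaluated at the moving base point $\eta\circ\delta_s(\xi)$.

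Finally I would re-express this through the left-invariant fields $X_i,Y_i$ evaluated at that same point, whose $x$– and $y$–coordinates are $x_0+sx$ and $y_0+sy$. A short expansion yields
$$\sum_{i=1}^n x_i X_i f=x\cdot\nabla_x f-2\big(x\cdot y_0+s\,x\cdot y\big)\partial_\tau f,\qquad \sum_{i=1}^n y_i Y_i f=y\cdot\nabla_y f+2\big(x_0\cdot y+s\,x\cdot y\big)\partial_\tau f,$$
so that adding the two makes the $s\,x\cdot y$ terms cancel and leaves exactly $x\cdot\nabla_x f+y\cdot\nabla_y f+2(x_0\cdot y-x\cdot y_0)\partial_\tau f$. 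Subtracting this from the expression for $g'(s)$ leaves precisely $2s\tau\,\partial_\tau f$, i.e. $g'(s)=\sum_i x_iX_if+\sum_i y_iY_if+2s\tau\,\partial_\tau f=(x,y,2s\tau)^{T}\cdot\widetilde{\nabla}_{\mathbb{H}}f(\eta\circ\delta_s(\xi))$. Integrating over $s\in[0,1]$ then gives the claim. The proof is entirely elementary; the only point requiring care is the bookkeeping of the quadratic cross-term in the group operation together with the fact that $X_i$ and $Y_i$ must be taken at the moving point $\eta\circ\delta_s(\xi)$ and not at $\eta$ — this is exactly what produces the $2s\tau$ weight in the last slot and the cancellation that removes the spurious $s\,x\cdot y$ contributions.
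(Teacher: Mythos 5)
Your proof is correct and follows essentially the same route as the paper: the fundamental theorem of calculus applied to $g(s)=f(\eta\circ\delta_s(\xi))$ on $[0,1]$, followed by identifying $g'(s)$ with $(x,y,2s\tau)^{T}\cdot\widetilde{\nabla}_{\mathbb{H}}f(\eta\circ\delta_s(\xi))$. The only difference is that you verify the Heisenberg chain-rule identity explicitly in Euclidean coordinates (checking the cancellation of the $s\,x\cdot y$ cross-terms coming from the group law), whereas the paper invokes this chain rule directly.
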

\begin{proof}
Using the fundamental theorem of calculus, we express the function $f$ along the curve $\gamma(s)=\eta\circ \delta_s(\xi)$ for $s\in[0,1]$,
$$f(\eta\circ\xi)-f(\eta)=\int_0^1\frac{d}{ds}f(\eta\circ\delta_s(\xi))\,ds.$$
Applying the chain rule in the Heisenberg group,
\begin{eqnarray*}
\frac{d}{ds}f(\eta\circ\delta_s(\xi))&=&\sum_{i=1}^nX_if(\eta\circ\delta_s(\xi))\frac{d}{ds}(sx_i)+\sum_{i=1}^nY_if(\eta\circ\delta_s(\xi))\frac{d}{ds}(sy_i)+\partial_\tau f(\eta\circ\delta_s(\xi))\frac{d}{ds}(s^2\tau)\\
&=&\sum_{i=1}^nx_i X_if(\eta\circ\delta_s(\xi))+\sum_{i=1}^ny_i Y_if(\eta\circ\delta_s(\xi))+2 s\tau \partial_\tau f(\eta\circ\delta_s(\xi))\\
&=&(x,y,2s \tau)^{T}\cdot\widetilde{\nabla}_{\mathbb{H}}f(\eta\circ\delta_s(\xi)).
\end{eqnarray*}
Integrating both sides from $0$ to $1$, the result follows.
\end{proof}
\begin{lemma}\label{Taylor}
Let $g\in L^1(\mathbb{H}^n)$ and put $\displaystyle M_g=\int_{\mathbb{H}^n}g(\eta)\,d\eta$. We have 
\begin{equation}\label{TaylorInequality1}
\lim\limits_{t\to\infty}\|h_t\ast_{_{\mathbb{H}}} g-M_gh_t\|_1=0.
\end{equation}
If, in addition, $(|\eta|_{_{\mathbb{H}}}+|\eta|^2_{_{\mathbb{H}}} ) g(\eta) \in L^1(\mathbb{H}^n)$, then
\begin{equation}\label{TaylorInequality2}
\|h_t\ast_{_{\mathbb{H}}} g-M_gh_t\|_1\leq C\,t^{-1/2}\|(|\eta|_{_{\mathbb{H}}}+|\eta|^2_{_{\mathbb{H}}} )  g(\eta)\|_1,\qquad\hbox{for all}\,\, \,t>0.
\end{equation}
\end{lemma}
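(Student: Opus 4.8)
\textbf{Proof proposal for Lemma \ref{Taylor}.}

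The plan is to prove both statements simultaneously by exploiting the Taylor expansion with integral remainder from Lemma \ref{Taylorexpansion}, combined with the scaling property $(vii)$ of the heat kernel and the gradient estimates of Lemma \ref{estimateheatkernel}. First I would write out the difference $h_t\ast_{_{\mathbb{H}}}g - M_g h_t$ as a single integral: for fixed $\eta$,
\begin{equation*}
(h_t\ast_{_{\mathbb{H}}}g)(\eta) - M_g h_t(\eta) = \int_{\mathbb{H}^n}\bigl(h_t(\eta\circ\xi^{-1}) - h_t(\eta)\bigr) g(\xi)\,d\xi,
\end{equation*}
using $\int_{\mathbb{H}^n} g(\xi)\,d\xi = M_g$. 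Applying Lemma \ref{Taylorexpansion} to the function $\zeta\mapsto h_t(\eta\circ\zeta)$ evaluated at $\zeta=\xi^{-1}$ (noting $h_t(\eta\circ\xi^{-1})=h_t(\eta\circ (-\xi))$ since $\xi^{-1}=-\xi$), one obtains
\begin{equation*}
h_t(\eta\circ\xi^{-1}) - h_t(\eta) = \int_0^1 (-x,-y,-2s\tau)^{T}\cdot \widetilde{\nabla}_{\mathbb{H}}h_t(\eta\circ\delta_s(\xi^{-1}))\,ds,
\end{equation*}
where $\xi=(x,y,\tau)$. Taking the $L^1$ norm in $\eta$, using Fubini and the left-invariance of the Haar measure (so that the translate $\eta\mapsto \eta\circ\delta_s(\xi^{-1})$ does not change the $L^1$ norm), I would bound
\begin{equation*}
\|h_t\ast_{_{\mathbb{H}}}g - M_g h_t\|_1 \leq \int_{\mathbb{H}^n}\int_0^1 \bigl(|x|+|y|+2s|\tau|\bigr)\,\bigl(\|\nabla_{\mathbb{H}}h_t\|_1 + \|\partial_\tau h_t\|_1\bigr)\,|g(\xi)|\,ds\,d\xi.
\end{equation*}

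Here the key quantitative input is Lemma \ref{estimateheatkernel}, which gives $\|\nabla_{\mathbb{H}}h_t\|_1 \lesssim t^{-1/2}$ and $\|\partial_\tau h_t\|_1 \lesssim t^{-1/2}$. However, the factor $|\tau|$ in the remainder is problematic, because on the Heisenberg group $|\tau|$ scales like $|\eta|_{_{\mathbb{H}}}^2$ rather than $|\eta|_{_{\mathbb{H}}}$; this is why the weight in \eqref{TaylorInequality2} involves both $|\eta|_{_{\mathbb{H}}}$ and $|\eta|_{_{\mathbb{H}}}^2$. Concretely, from the Kor\'anyi norm definition one has $|x|+|y| \leq C|\xi|_{_{\mathbb{H}}}$ and $|\tau| \leq |\xi|_{_{\mathbb{H}}}^2$, so $|x|+|y|+2s|\tau| \leq C(|\xi|_{_{\mathbb{H}}} + |\xi|_{_{\mathbb{H}}}^2)$. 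Substituting these bounds yields
\begin{equation*}
\|h_t\ast_{_{\mathbb{H}}}g - M_g h_t\|_1 \leq C\,t^{-1/2}\int_{\mathbb{H}^n}\bigl(|\xi|_{_{\mathbb{H}}} + |\xi|_{_{\mathbb{H}}}^2\bigr)|g(\xi)|\,d\xi = C\,t^{-1/2}\,\|(|\eta|_{_{\mathbb{H}}}+|\eta|_{_{\mathbb{H}}}^2)g(\eta)\|_1,
\end{equation*}
which is exactly \eqref{TaylorInequality2}. One subtlety I would need to check carefully is that the weight appearing in the remainder after composing with $\delta_s(\xi^{-1})$ is genuinely controlled by the weight of $g$ alone, uniformly in $s\in[0,1]$ and $t>0$ — this follows because the $|x|,|y|,|\tau|$ in the coefficient vector come from $\xi$ itself, not from the shifted argument, so the translation invariance of $\|\cdot\|_1$ handles the shift cleanly.

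For the first statement \eqref{TaylorInequality1}, where only $g\in L^1$ is assumed, the standard approach is a density argument. Given $\varepsilon>0$, approximate $g$ in $L^1(\mathbb{H}^n)$ by a function $g_\varepsilon$ that is compactly supported (hence satisfies the weighted integrability hypothesis), with $\|g-g_\varepsilon\|_1 < \varepsilon$; such functions are dense in $L^1$. Writing $M_{g_\varepsilon}=\int g_\varepsilon$, note $|M_g - M_{g_\varepsilon}| \leq \varepsilon$. Then split
\begin{equation*}
\|h_t\ast_{_{\mathbb{H}}}g - M_g h_t\|_1 \leq \|h_t\ast_{_{\mathbb{H}}}(g-g_\varepsilon)\|_1 + \|h_t\ast_{_{\mathbb{H}}}g_\varepsilon - M_{g_\varepsilon}h_t\|_1 + |M_g-M_{g_\varepsilon}|\,\|h_t\|_1.
\end{equation*}
By Young's inequality (Lemma \ref{Younginequality}) and property $(v)$, $\|h_t\ast_{_{\mathbb{H}}}(g-g_\varepsilon)\|_1 \leq \|h_t\|_1\|g-g_\varepsilon\|_1 = \|g-g_\varepsilon\|_1 < \varepsilon$ and $\|h_t\|_1=1$, so the first and third terms are each $\leq\varepsilon$ uniformly in $t$. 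The middle term tends to $0$ as $t\to\infty$ by the already-established \eqref{TaylorInequality2} applied to $g_\varepsilon$. Hence $\limsup_{t\to\infty}\|h_t\ast_{_{\mathbb{H}}}g - M_g h_t\|_1 \leq 2\varepsilon$, and letting $\varepsilon\to0$ gives the claim. The main obstacle I anticipate is the bookkeeping in the first paragraph: making sure the integral-remainder identity is applied to the correct function in the correct variable (the kernel as a function of the convolution variable, with $\eta$ playing the role of the base point), and that the left-invariance of Haar measure is invoked legitimately when pulling the $L^1_\eta$ norm inside past the shift $\delta_s(\xi^{-1})$. The scaling asymmetry between $|\tau|$ and the horizontal variables is the conceptual point but, once noticed, is handled in one line via the Kor\'anyi norm.
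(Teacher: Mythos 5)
Your proposal is correct and follows essentially the same route as the paper: the weighted estimate \eqref{TaylorInequality2} via the group Taylor expansion of Lemma \ref{Taylorexpansion}, Fubini, left-invariance of Haar measure, and the $t^{-1/2}$ bounds of Lemma \ref{estimateheatkernel}, with $|x|+|y|+2s|\tau|$ absorbed into the weight $|\xi|_{_{\mathbb{H}}}+|\xi|^2_{_{\mathbb{H}}}$; and \eqref{TaylorInequality1} by the same three-term density argument using Young's inequality and $\|h_t\|_1=1$. The only cosmetic difference is that you approximate by compactly supported $L^1$ functions while the paper uses smooth test functions from $\mathcal{D}(\mathbb{H}^n)$, which changes nothing.
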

\begin{proof}
We adapt the technique used in \cite[Proposition~48.6]{QS} and \cite[Lemma~6]{KFA}. We first establish \eqref{TaylorInequality2} by supposing $g\in L^1(\mathbb{H}^n,\,(1+|\eta|^2_{_{\mathbb{H}}} )\,d\eta)$. Using Taylor's expansion (Lemma \ref{Taylorexpansion}), Fubini's theorem and Lemma \ref{estimateheatkernel}, we have
\begin{eqnarray*}
\|h_t\ast_{_{\mathbb{H}}} g-M_gh_t\|_1&=&\left\|\int_{\mathbb{H}^n}\left(h_t(\eta\circ \xi^{-1})-h_t(\eta)\right)g(\xi)\,d\xi\right\|_1\\
&\leq&\int_0^1\int_{\mathbb{H}^n}\left\|\nabla_{\mathbb{H}} h_t(\eta\circ\delta_s(\xi))\right\|_1 |(x,y)||g(\xi)|\,d\xi\,ds+2\int_0^1\int_{\mathbb{H}^n}\left\|\partial_\tau h_t(\eta\circ\delta_s(\xi))\right\|_1 s|\tau||g(\xi)|\,d\xi\,ds\\
&\leq&C\,t^{-\frac{1}{2}}\int_0^1\int_{\mathbb{H}^n} (|(x,y)|+|\tau|)|g(\xi)|\,d\xi\,ds\\
&\leq&C\,t^{-\frac{1}{2}}\|(|\eta|_{_{\mathbb{H}}}+|\eta|^2_{_{\mathbb{H}}} ) g(\eta)\|_1.
\end{eqnarray*}
Let us next prove \eqref{TaylorInequality1}. For $g\in L^1(\mathbb{H}^n)$, using e.g. \cite[Theorem~2.2]{Baldi}, there exists a sequence $\{g_j\}\in \mathcal{D}(\mathbb{H}^n)$ such that $g_j\rightarrow g$ in $L^1(\mathbb{H}^n)$. For each $j$, using the fact that $\|h_t\|_1=1$, and applying \eqref{TaylorInequality2} to $g_j$, we have
\begin{eqnarray*}
\|h_t\ast_{_{\mathbb{H}}} g-M_gh_t\|_1&\leq&\|h_t\ast_{_{\mathbb{H}}} g-h_t\ast_{_{\mathbb{H}}} g_j\|_1+\|h_t\ast_{_{\mathbb{H}}} g_j-M_{g_j}h_t\|_1+\|M_{g_j}h_t-M_gh_t\|_1\\
&\leq&\|g-g_j\|_1\|h_t\|_1+\|h_t\ast_{_{\mathbb{H}}}g_j-M_{g_j}h_t\|_1+|M_{g_j}-M_g|\|h_t\|_1\\
&\leq&2\|g-g_j\|_1+C\,t^{-\frac{1}{2}}\|(|\eta|_{_{\mathbb{H}}}+|\eta|^2_{_{\mathbb{H}}} ) g_j(\eta)\|_1.
\end{eqnarray*}
Hence,
$$\limsup_{t\rightarrow\infty}\|h_t\ast_{_{\mathbb{H}}} g-M_gh_t\|_1\leq 2\|g-g_j\|_1,$$
and the conclusion follows by passing to the limit as $j\rightarrow\infty$.
\end{proof} 
In preparation for later use, we introduce and describe  the properties of the cut-off functions $\{\varphi_R\}_{R>0}$
defined as 
\begin{equation}\label{varphi_R}
\varphi_R(t,\eta)= \Phi^{\ell}\big(\xi_R(t,\eta)\big),\qquad \xi_R(t,\eta)=\frac{t+|\eta|_{_{\mathbb{H}}}^2}{R},\quad\eta=(x,y,\tau)\in\mathbb{H}^n,\,\,t\geq0,
\end{equation}
where $\ell=2p'$, $p'=p/(p-1)$ is the H\"older conjugate of $p$, and $\Phi\in \mathcal{C}^\infty(\mathbb{R})$ is a smooth non-increasing function 
satisfying $\mathbbm{1}_{[1,\infty)} \leq \Phi\leq \mathbbm{1}_{[\frac{1}{2},\infty)}$.  Furthermore, we define 
$$\varphi_R^*(t,\eta)=\Phi^\ell_*\big(\xi_R(t,\eta)\big)\quad\hbox{where}\quad \Phi_*=\mathbbm{1}_{[\frac{1}{2},1]}\Phi.$$
\begin{lemma}\label{lem:testfunction}
Define the family of cut-off functions $\{\varphi_R\}_{R>0}$
as in \eqref{varphi_R}.
Then the following inequality holds:
\[
|\partial_t\varphi_R(t,\eta)|
+
|\Delta_{\mathbb{H}}\varphi_R(t,\eta)|\leq \frac{C}{R}
\left(\varphi_R^*(t,\eta)\right)^{\frac{1}{p}}.
\]
\end{lemma}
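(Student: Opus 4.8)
\textbf{Proof strategy for Lemma \ref{lem:testfunction}.}
The plan is to differentiate the composite function $\varphi_R=\Phi^\ell(\xi_R)$ via the chain rule and control each factor that appears. Writing $\xi=\xi_R(t,\eta)=(t+|\eta|_{_{\mathbb{H}}}^2)/R$, we have $\partial_t\varphi_R=\ell\,\Phi^{\ell-1}(\xi)\,\Phi'(\xi)\,\partial_t\xi$ with $\partial_t\xi=1/R$, and similarly $\Delta_{\mathbb{H}}\varphi_R$ expands, using the product rule for $\Delta_{\mathbb{H}}$ acting on $\Phi^\ell(\xi)$, into a sum of a term with one derivative of $\Phi^\ell$ multiplied by $\Delta_{\mathbb{H}}\xi$ and a term with two derivatives of $\Phi^\ell$ multiplied by $|\nabla_{\mathbb{H}}\xi|^2$. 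Concretely, $\Delta_{\mathbb{H}}\varphi_R=(\Phi^\ell)'(\xi)\,\Delta_{\mathbb{H}}\xi+(\Phi^\ell)''(\xi)\,|\nabla_{\mathbb{H}}\xi|^2$, so I will need pointwise bounds on $\Delta_{\mathbb{H}}\xi$ and $|\nabla_{\mathbb{H}}\xi|^2$, each expressed in terms of $1/R$ times a power of $\xi$ that the cutoff can absorb.

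First I would compute the Heisenberg derivatives of $|\eta|_{_{\mathbb{H}}}^2 = \big((|x|^2+|y|^2)^2+\tau^2\big)^{1/2}$. A direct calculation with the vector fields $X_i=\partial_{x_i}-2y_i\partial_\tau$, $Y_i=\partial_{y_i}+2x_i\partial_\tau$ shows that $\nabla_{\mathbb{H}}(|\eta|_{_{\mathbb{H}}}^2)$ is a smooth vector field that is homogeneous of degree $1$ with respect to $\delta_\lambda$, hence $|\nabla_{\mathbb{H}}(|\eta|_{_{\mathbb{H}}}^2)|\lesssim |\eta|_{_{\mathbb{H}}}$, and $\Delta_{\mathbb{H}}(|\eta|_{_{\mathbb{H}}}^2)$ is homogeneous of degree $0$, hence bounded; both facts can be seen either by explicit differentiation or by the scaling $|\delta_\lambda\eta|_{_{\mathbb{H}}}=\lambda|\eta|_{_{\mathbb{H}}}$ together with the fact that $X_i,Y_i$ are $\delta_\lambda$-homogeneous of degree $1$ while $\Delta_{\mathbb{H}}$ is homogeneous of degree $2$. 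Consequently $|\nabla_{\mathbb{H}}\xi|=R^{-1}|\nabla_{\mathbb{H}}(|\eta|_{_{\mathbb{H}}}^2)|\lesssim R^{-1}|\eta|_{_{\mathbb{H}}}$ and $|\Delta_{\mathbb{H}}\xi|\lesssim R^{-1}$. On the support of the derivatives of $\Phi$, which is contained in $\{1/2\le\xi\le1\}$, we have $|\eta|_{_{\mathbb{H}}}^2\le t+|\eta|_{_{\mathbb{H}}}^2=R\xi\le R$, so $|\nabla_{\mathbb{H}}\xi|^2\lesssim R^{-2}|\eta|_{_{\mathbb{H}}}^2\le R^{-2}\cdot R\xi\lesssim R^{-1}$ there. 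Thus every term in $\partial_t\varphi_R$ and $\Delta_{\mathbb{H}}\varphi_R$ is bounded by $C R^{-1}$ times a product of powers of $\Phi(\xi)$ and $\Phi'(\xi),\Phi''(\xi)$, all of which vanish outside $\{1/2\le\xi\le1\}$ and are bounded there.

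It remains to repackage the right-hand side as $C R^{-1}(\varphi_R^*)^{1/p}$. Since $\ell=2p'$, the lowest power of $\Phi$ surviving after taking one or two derivatives of $\Phi^\ell$ is $\Phi^{\ell-2}=\Phi^{2p'-2}$, and $(\ell-2)\cdot\frac{1}{p'}\cdot\frac{1}{?}$ — more precisely one checks $\ell-2=2p'-2=2(p'-1)=2p'/p=\ell/p$, so $\Phi^{\ell-2}=(\Phi^\ell)^{1/p}$; and since all derivative factors $\Phi'(\xi),\Phi''(\xi)$ are supported in $\{1/2\le\xi\le1\}$ where $\Phi=\Phi_*$, we get $\Phi^{\ell-2}(\xi)=(\Phi_*^\ell(\xi))^{1/p}=(\varphi_R^*(t,\eta))^{1/p}$ on that set, and both sides vanish off it. Absorbing the bounded factors $|\Phi'|,|\Phi''|,\ell,\ell(\ell-1)$ and the constants from the geometric estimates into $C$ finishes the proof. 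The only mildly delicate point is the bookkeeping with the exponent $\ell=2p'$ — verifying that the choice of power of $\Phi$ in the cutoff is exactly what makes the surviving factor a perfect $1/p$-th power of $\varphi_R^*$ — together with justifying the homogeneity bounds on $\nabla_{\mathbb{H}}(|\eta|_{_{\mathbb{H}}}^2)$ and $\Delta_{\mathbb{H}}(|\eta|_{_{\mathbb{H}}}^2)$; everything else is a routine chain-rule computation.
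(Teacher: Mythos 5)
Your proof is correct, and it reaches the paper's estimate by a slightly different route. The skeleton is the same in both arguments: everything hinges on the support of $\Phi'$, $\Phi''$ being contained in $[\tfrac12,1]$, on the region $t+|\eta|_{_{\mathbb{H}}}^2\leq R$ this gives the spatial smallness you exploit, and on the exponent identity $\ell-2=2p'-2=2p'/p=\ell/p$, which is exactly how the paper turns the surviving factor $\Phi^{\ell-2}$ into $(\varphi_R^*)^{1/p}$. The difference is in how the spatial derivatives are handled: the paper expands $\Delta_{\mathbb{H}}\varphi_R$ through the coordinate expression \eqref{40} and estimates each of the many resulting Euclidean terms ($\Delta_x\xi_R$, $(|x|^2+|y|^2)\partial_\tau^2\xi_R$, the mixed $x_j\partial^2_{y_j\tau}$ terms, etc.) on the support, asserting the final bound as a ``direct computation''; you instead use the intrinsic chain rule for the sum-of-squares operator, $\Delta_{\mathbb{H}}\Phi^\ell(\xi_R)=(\Phi^\ell)'(\xi_R)\,\Delta_{\mathbb{H}}\xi_R+(\Phi^\ell)''(\xi_R)\,|\nabla_{\mathbb{H}}\xi_R|^2$, and reduce everything to the two homogeneity bounds $|\nabla_{\mathbb{H}}(|\eta|_{_{\mathbb{H}}}^2)|\lesssim|\eta|_{_{\mathbb{H}}}$ and $|\Delta_{\mathbb{H}}(|\eta|_{_{\mathbb{H}}}^2)|\lesssim1$, plus $|\eta|_{_{\mathbb{H}}}^2\leq R\xi_R\leq R$ on the support. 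Your version is shorter and makes visible why the sub-Riemannian structure saves the estimate (the Euclidean $\partial_\tau^2$ of the gauge is unbounded near the center, but its $\Delta_{\mathbb{H}}$ is bounded because of the degenerate weight), while the paper's version is fully explicit in coordinates. Two small points to state when writing it up: (i) $|\eta|_{_{\mathbb{H}}}^2=\big((|x|^2+|y|^2)^2+\tau^2\big)^{1/2}$ is smooth only on $\mathbb{H}^n\setminus\{0\}$, so the homogeneity bounds come from taking the supremum over the compact Kor\'anyi unit sphere and hold off the group origin (a null set; the paper's computation carries the same implicit restriction, so this is not a gap); and (ii) the terms carrying $\Phi^{\ell-1}$ should be dominated by $\Phi^{\ell-2}$ using $0\leq\Phi\leq1$ and $\ell=2p'>2$, which you implicitly do when you say the lowest surviving power is $\Phi^{\ell-2}$.
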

\begin{proof}
It is easy to see that $|\partial_t\varphi_R|
\leq \frac{\ell\|\Phi_*'\|_{L^\infty}}{R}\varphi_R^*(t,\eta)^{\frac{1}{p}}$. Therefore we focus our attention to $\Delta_{\mathbb{H}}\varphi_R$. In fact,
\begin{eqnarray*}
\left|\Delta_{\mathbb{H}}\varphi_R(t,\eta)\right|&\leq&\left|\Delta_x\Phi^\ell\big(\xi_R(t,\eta)\big)\right|+\,\left|\Delta_y\Phi^{\ell}\big(\xi_R(t,\eta)\big)\right|+\,4(|x|^2+|y|^2)\left|\partial_\tau^2\Phi^{\ell}\big(\xi_R(t,\eta)\big)\right|\\
&{}&+\,4\sum_{j=1}^{n}|x_j|\left|\partial^2_{y_j\tau}\Phi^{\ell}\big(\xi_R(t,\eta)\big)\right| +\,4\sum_{j=1}^{n}|y_j|\left|\partial^2_{x_j\tau}\Phi^{\ell}\big(\xi_R(t,\eta)\big)\right|.
\end{eqnarray*}
So,
\begin{eqnarray*}
\left|\Delta_{\mathbb{H}}\varphi_R(t,\eta)\right|&\lesssim&\Phi^{\ell-2}_*\big(\xi_R(t,\eta)\big)\left(|\Phi'_*(\xi_R(t,\eta))|^2+|\Phi''_*(\xi_R(t,\eta))|\right)\left(\left|\nabla_x\xi_R(t,\eta)\right|^2+\left|\nabla_y\xi_R(t,\eta)\right|^2\right)\\
&{}&+\,\Phi^{\ell-1}_*\big(\xi_R(t,\eta)\big)|\Phi'_*(\xi_R(t,\eta))|\left(\left|\Delta_x\xi_R(t,\eta)\right|+\left|\Delta_y\xi_R(t,\eta)\right|\right)\\
&{}&+\,(|x|^2+|y|^2)\Phi^{\ell-2}_*\big(\xi_R(t,\eta)\big)\left(|\Phi'_*(\xi_R(t,\eta))|^2+|\Phi''_*(\xi_R(t,\eta))|\right)\left|\partial_\tau\xi_R(t,\eta)\right|^2\\
&{}&+\,(|x|^2+|y|^2)\Phi^{\ell-1}_*\big(\xi_R(t,\eta)\big)|\Phi'_*(\xi_R(t,\eta))|\left|\partial^2_\tau\xi_R(t,\eta)\right|\\
&{}&+\,\Phi^{\ell-2}_*\big(\xi_R(t,\eta)\big)\left(|\Phi'_*(\xi_R(t,\eta))|^2+|\Phi''_*(\xi_R(t,\eta))|\right)\left|\partial_\tau\xi_R(t,\eta)\right|\sum_{j=1}^{n}|x_j|\left|\partial_{y_j}\xi_R(t,\eta)\right|\\
&{}&+\,\Phi^{\ell-1}_*\big(\xi_R(t,\eta)\big)|\Phi'_*(\xi_R(t,\eta))|\sum_{j=1}^{n}|x_j|\left|\partial^2_{y_j\tau}\xi_R(t,\eta)\right|\\
&{}&+\,\Phi^{\ell-2}_*\big(\xi_R(t,\eta)\big)\left(|\Phi'_*(\xi_R(t,\eta))|^2+|\Phi''_*(\xi_R(t,\eta))|\right)\left|\partial_\tau\xi_R(t,\eta)\right|\sum_{j=1}^{n}|y_j|\left|\partial_{x_j}\xi_R(t,\eta)\right|\\
&{}&+\,\Phi^{\ell-1}_*\big(\xi_R(t,\eta)\big)|\Phi'_*(\xi_R(t,\eta))|\sum_{j=1}^{n}|y_j|\left|\partial^2_{x_j\tau}\xi_R(t,\eta)\right|.
\end{eqnarray*}
Taking into account the support of $\Phi_*$, a direct computation yields
$$\left|\Delta_{\mathbb{H}}\varphi_R(t,\eta)\right|\lesssim R^{-1} \left(\varphi_R^*(t,\eta)\right)^{\frac{1}{p}}.$$
\end{proof}

\section{Proof of Theorem \ref{decay}}\label{sec3}
We start by proving $M_\infty>0$. From \eqref{mass}, we have
\begin{equation}\label{3.1}
    0\leq\int_{\mathbb{H}^n}u(t,\eta)\,d\eta=\int_{\mathbb{H}^n}u_0(\eta)\,d\eta - \int_0^t\int_{\mathbb{H}^n}k(s)u^p(s,\eta)\,d\eta\,ds.
\end{equation}
 Hence, for $u_0\in L^1(\mathbb{H}^n)$, we immediately obtain
\begin{equation}\label{3.2}
u\in L^\infty((0,\infty),L^1(\mathbb{H}^n))\quad\hbox{and}\quad  \int_0^\infty\int_{\mathbb{H}^n}k(t)u^p(t,\eta)\,d\eta\,dt\leq \|u_0\|_1<\infty.
\end{equation}
As $u$ is a mild solution, we immediately get the following estimate
$$
0\leq u(t)\leq S_{\mathbb{H}}(t)u_0,\qquad\hbox{for all}\,\,t> 0,
$$
which implies, using Lemma \ref{Lp-Lqestimate}, for all $t> 0$, that
\begin{eqnarray}\label{HTP}
  \|u(t)\|^p_p &\leq&\left\|S_{\mathbb{H}}(t)u_0\right\|^p_p\nonumber\\
  &\leq& \min\left\{C\,t^{-\frac{Q(p-1)}{2}}\|u_0\|^p_1,\,\|u_0\|^p_p\right\}\nonumber\\
  &\equiv &H(t,p,u_0).
\end{eqnarray}
Now, for fixed $\varepsilon\in(0,1]$, we denote by $u^\varepsilon=u^\varepsilon(x,t)$ the solution of \eqref{1} with initial condition $\varepsilon u_0$. The comparison principle implies that
$$0\leq u^\varepsilon(t,\eta)\leq u(t,\eta),\qquad\hbox{for all}\,\,\eta\in\mathbb{H}^n,\,\,t>0.$$
Hence,
$$M_\infty\geq M^\varepsilon_\infty\equiv\lim\limits_{t\rightarrow\infty}\int_{\mathbb{H}^n}u^\varepsilon(t,\eta)\,d\eta .$$
Therefore, to prove \eqref{2.4}, it suffices to show, for small $\varepsilon>0$ which will be determined later, that $M^\varepsilon_\infty >0$.
Applying \eqref{3.1} to $u^\varepsilon$ and letting $t\rightarrow\infty$, we obtain
\begin{equation}\label{3.11}
  M^\varepsilon_\infty = \varepsilon\left\{\int_{\mathbb{H}^n}u_0(\eta)\,d\eta -\frac{1}{\varepsilon} \int_0^\infty\int_{\mathbb{H}^n}k(t)\left(u^\varepsilon(t,\eta)\right)^p\,d\eta\,dt\right\}.
\end{equation}
Furthermore, by applying \eqref{HTP} to $u^\varepsilon$, we get
$$ \|u^\varepsilon(t)\|^p_p\leq H(t,p,\varepsilon u_0)=\varepsilon^pH(t,p,u_0).$$
Hence
\begin{eqnarray*}
  \frac{1}{\varepsilon}\int_0^\infty\int_{\mathbb{H}^n}k(t)\left(u^\varepsilon (t,\eta)\right)^p\,d\eta\,dt&=& \frac{1}{\varepsilon}\int_0^\infty k(t)  \|u^\varepsilon(t)\|^p_p\,dt\\
&\leq&\varepsilon^{p-1}\int_0^\infty k(t)H(t,p,u_0)\,dt.
\end{eqnarray*}
 It follows immediately from the definition of the function $H$ that
\begin{eqnarray*}
\int_{0}^{1}k(t)H(t,p,u_0)\,dt&\leq& \|u_0\|_p^p\int_0^1k(t)dt<\infty,\\
\int_{1}^{\infty}k(t)H(t,p,u_0)\,dt&\leq& C\,\|u_0\|_1^p\int_1^{\infty}t^{-\frac{Q(p-1)}{2}}k(t)dt<\infty.
\end{eqnarray*}
Consequently,
$$\int_{0}^{\infty}k(t)H(t,p,u_0)\,dt<\infty,$$
which yields to
$$
\lim_{\varepsilon\rightarrow 0^+}\frac{1}{\varepsilon} \int_0^\infty\int_{\mathbb{H}^n}k(t)\left(u^\varepsilon(t,\eta)\right)^p\,d\eta\,dt=0.
$$
Hence, as $\displaystyle \int_{\mathbb{H}^n}u_0(\eta)\,d\eta>0$, there exists $\varepsilon_0\in(0,1)$ such that
$$\frac{1}{\varepsilon_0} \int_0^\infty\int_{\mathbb{H}^n}k(t)\left(u^{\varepsilon_0}(t,\eta)\right)^p\,d\eta\,dt\leq \frac{1}{2}\int_{\mathbb{H}^n}u_0(\eta)\,d\eta,$$
which implies that
$$M_\infty\geq M^{\varepsilon_0}_\infty\geq \frac{\varepsilon_0}{2}\int_{\mathbb{H}^n}u_0(\eta)\,d\eta>0.$$
This completes the proof of \eqref{2.4}. The proof of the asymptotic behavior \eqref{2.3} will be divided into two cases.\\

\noindent \textbf{Case of $\boldsymbol{q = 1}$.}  Applying Minkowski's inequality, we decompose the expression $\|u(t)-M_\infty h_t\|_1$ as follows:
\begin{eqnarray}\label{4.1}
\|u(t)-M_\infty h_t\|_1 &\leq& \|u(t)-h_{t-t_0}\ast_{_{\mathbb{H}}}  u(t_0)\|_1+\,\|h_{t-t_0}\ast_{_{\mathbb{H}}}  u(t_0)-M_{\mathbb{H}}(t_0)h_{t-t_0}\|_1\nonumber\\
&{}&+\,\|M_{\mathbb{H}}(t_0)h_{t-t_0}-M_{\mathbb{H}}(t_0)h_t\|_1+\,\|M_{\mathbb{H}}(t_0)h_t-M_\infty h_t\|_1,
\end{eqnarray}
for all $t\geq t_0 > 0$. Since $u$ is a mild solution, using the $L^1-L^1$ estimate \eqref{Lp-Lqestimate}, we get from \eqref{IEG} that
\begin{equation}\label{4.2}
\|u(t)-h_{t-t_0}\ast_{_{\mathbb{H}}}  u(t_0)\|_1\lesssim \int_{t_0}^t k(s)\|u(s)\|_p^p ds,
\end{equation} 
for all $t\geq t_0 >0$. Furthermore, applying Lemma \ref{Taylor} with $g=u(t_0)$, we obtain
\begin{equation}\label{4.3}
\lim_{t\rightarrow\infty}\|h_{t-t_0}\ast_{_{\mathbb{H}}}  u(t_0)-M_{\mathbb{H}}(t_0)h_{t-t_0}\|_1=0.
\end{equation} 
Applying again Lemma \ref{Taylor} with $g=h_{t_0}$, and using Proposition \ref{properties} $(v)$, we find
$$
\lim_{t\rightarrow\infty}\|h_{t-t_0}-h_{t-t_0}\ast_{_{\mathbb{H}}}  h_{t_0}\|_1=0,
$$ 
which implies, using Proposition \ref{properties} $(ix)$, that
\begin{equation}\label{4.4}
\|M_{\mathbb{H}}(t_0)h_{t-t_0}-M_{\mathbb{H}}(t_0)h_t\|_1\leq \|u_0\|_1\|h_{t-t_0}-h_t\|_1=\|u_0\|_1\|h_{t-t_0}-h_{t-t_0}\ast_{_{\mathbb{H}}}  h_{t_0}\|_1\longrightarrow 0,
\end{equation}
when $t$ goes to $\infty$. In addition, from Proposition \ref{properties} $(v)$, we directly have
\begin{equation}\label{4.5}
\|M_{\mathbb{H}}(t_0)h_t-M_\infty h_t\|_1\leq |M_{\mathbb{H}}(t_0)-M_\infty|.
\end{equation} 

Substituting estimates \eqref{4.2}-\eqref{4.5} into \eqref{4.1}, we deduce
\begin{equation}
\limsup_{t\rightarrow\infty}\|u(t)-M_\infty h_t\|_1\lesssim\int_{t_0}^\infty k(s)\|u(s)\|_p^p ds+|M_{\mathbb{H}}(t_0)-M_\infty|.
\end{equation}
Letting $t_0\to \infty$ and using \eqref{3.2}, it follows that 
\begin{equation}
\lim\limits_{t\to\infty}\|u(t)-M_\infty h_t\|_1=0.
\end{equation}
\noindent \textbf{Case of $\boldsymbol{q > 1}$.}  On the one hand, for each $m\in[1,+\infty]$, using the $L^1-L^m$ estimate \eqref{Lp-Lqestimate}, we arrive at
\begin{equation}\label{E1}
\|u(t)\|_m\leq\left\|S_{\mathbb{H}}(t)u_0\right\|_m \leq  Ct^{-\frac{Q}{2}\left(1-\frac{1}{m}\right)}\|u_0\|_1.
\end{equation}
In addition, by Proposition \ref{pointwiseupperbound} (or Proposition \ref{pointwiseheatestimation}) along with Proposition \ref{properties} $(v)$ , we obtain the decay estimate 
\begin{eqnarray}\label{E2}
\|h_t\|_m\lesssim \,t^{-\frac{Q(m-1)}{2 m}}.
\end{eqnarray}
On the other hand, for a fixed $m\in[1,+\infty]$ and for every $q\in [1,m)$, by employing Minkowski's inequality followed by H\"older's inequality, one obtains
\begin{equation}\label{final}
\|u(t)-M_\infty h_t\|_q\leq \|u(t)-M_\infty h_t\|_1^{1-\delta}\left(\|u(t)\|_m^\delta+\|M_\infty h_t\|_m^\delta\right),
\end{equation}  
where $\delta=(1-1/q)/(1-1/m)$. Inserting \eqref{E1} and \eqref{E2} into \eqref{final} yields
$$
t^{\frac{Q}{2}\left(1-\frac{1}{q}\right)}\|u(t)-M_\infty
h_t\|_q\lesssim \|u(t)-M_\infty h_t\|_1^{1-\delta}\longrightarrow 0,\quad\hbox{as}\;\; t\to\infty.
$$
This completes the proof of Theorem \ref{decay}.\hfill$\square$


\section{Proof of Theorem \ref{convto0}}\label{sec4}
We assume $M_{\infty}>0$ and argue by contradiction. The idea of the proof is to use the variational formulation of the weak solution by choosing the appropriate test function. As $u$ is a global mild solution, then $u$ is a global weak solution and therefore
 \begin{eqnarray}\label{newweaksolution}
&{}&\int_{\mathbb{H}^n}u(T,\eta)\varphi(T,\eta)\,d\eta-\int_{\mathbb{H}^n}u_0(\eta)\varphi(0,\eta)\,d\eta+\int_0^T\int_{\mathbb{H}^n}k(t)u^p\varphi\,d\eta\,dt\\
&{}&=\int_0^T\int_{\mathbb{H}^n}u\left[\partial_t\varphi+\Delta_{\mathbb{H}}\varphi \right]\,d\eta\,dt\nonumber,
\end{eqnarray}
 for all $T>0$ and any test function $\varphi\in W^{1,\infty}([0,T),W^{2,\infty}(\mathbb{H}^n))$. Let $\varphi_R$ be a test function as defined in \eqref{varphi_R}. Consequently, for every $T>R$, 
\begin{align*}
&
M_{\mathbb{H}}(u(T))-\int_{\mathbb{H}^n}u_0(\eta)\varphi_R(0,\eta)\,d\eta
+\int_0^T\int_{\mathbb{H}^n}k(t)(u(t,\eta))^p\varphi_R(t,\eta)\,d\eta\,dt
\\&=\int_0^T\int_{\mathbb{H}^n}u(t,\eta)\Big(\partial_t\varphi_R(t,\eta)+\Delta_{\mathbb{H}}\varphi_R(t,\eta)\Big)\,d\eta\,dt
\\
&\leq 
\frac{C}{R}
\int_0^T\int_{\mathbb{H}^n}u(t,\eta)\varphi_R^*(t,\eta)^{\frac{1}{p}}
\,d\eta\,dt,
\end{align*}
where we have used $\varphi_R(T,\eta)=1$ and Lemma \eqref{lem:testfunction}. Letting $T\rightarrow\infty$, 
using the dominated convergence theorem, we arrive at
\begin{equation}\label{newweaksolution1}
M_\infty-\int_{\mathbb{H}^n}u_0(\eta)\varphi_R(0,\eta)\,d\eta+\int_0^\infty\int_{\mathbb{H}^n}k(t)(u(t,\eta))^p\varphi_R(t,\eta)\,d\eta\,dt\leq  \frac{C}{R} \int_0^\infty\int_{\mathbb{H}^n}u(t,\eta)\varphi_R^*(t,\eta)^{\frac{1}{p}}
\,d\eta\,dt.
\end{equation}
Since $\varphi_R(t,\eta)\rightarrow 0$ when $R\rightarrow\infty$, we conclude that
$$-\int_{\mathbb{H}^n}u_0(\eta)\varphi_R(0,\eta)\,d\eta+\int_0^\infty\int_{\mathbb{H}^n}k(t)(u(t,\eta))^p\varphi_R(t,\eta)\,d\eta\,dt\longrightarrow 0,\qquad\hbox{when}\,\,R\rightarrow\infty,$$
where we have used the dominated convergence theorem together with \eqref{3.2}, therefore there exists $R_1>0$ such that
\begin{equation}\label{newweaksolution2}
M_\infty-\int_{\mathbb{H}^n}u_0(\eta)\varphi_R(0,\eta)\,d\eta+\int_0^\infty\int_{\mathbb{H}^n}k(t)u^p\varphi_R(t,\eta)\,d\eta\,dt\geq\frac{M_\infty}{2}>0,\quad\hbox{for all}\,\,R\geq R_1.
\end{equation}
Combining \eqref{newweaksolution2} and \eqref{newweaksolution1}, we get
\begin{equation}\label{newweaksolution3}
\frac{M_\infty}{2}\leq  \frac{C}{R} \int_0^\infty\int_{\mathbb{H}^n}u(t,\eta)\varphi_R^*(t,\eta)^{\frac{1}{p}}\,d\eta\,dt,\quad\hbox{for all}\,\,R\geq R_1.
\end{equation}
By H\"older's inequality, we arrive at 
\begin{eqnarray}\label{testfuntionmethod}
 \frac{M_{\infty}}{2} &\leq&  \frac{C}{R} \left(\int_0^\infty\int_{\mathbb{H}^n}\mathbbm{1}_{[\frac{1}{2},1]}(\xi_R(t,\eta))\,d\eta\,dt
\right)^{1-\frac{1}{p}}\left(\int_0^\infty\int_{\mathbb{H}^n}(u(t,\eta))^p\varphi_R^*(t,\eta)\,d\eta\,dt\right)^{\frac{1}{p}}\nonumber\\
&\lesssim&  R^{\frac{Q}{2p}(p-1)-\frac{1}{p}}\left(\int_0^\infty\int_{\mathbb{H}^n}(u(t,\eta))^p\varphi_R^*(t,\eta)\,d\eta\,dt\right)^{\frac{1}{p}},
\end{eqnarray}
for every $R\geq R_1$. To proceed, we introduce the auxiliary function
\[
Y(R)=
\int_R^\infty \left(\int_0^\infty\int_{\mathbb{H}^n}(u(t,\eta))^p\varphi_\rho^*(t,\eta)\,d\eta\,dt\right)\frac{d\rho}{\rho}, \quad R>0,
\]
which is well-defined and bounded.  Indeed, we have
$$Y(R)\leq \int_0^\infty\int_0^\infty\int_{\mathbb{H}^n}(u(t,\eta))^p \mathbbm{1}_{[\frac{1}{2},1]}\big(\xi_\rho(t,\eta)\big)\,d\eta\,dt\frac{d\rho}{\rho}=\int_0^\infty\int_{\mathbb{H}^n}(u(t,\eta))^p\int_0^\infty \mathbbm{1}_{[\frac{1}{2},1]}\big(\xi_\rho(t,\eta)\big)\frac{d\rho}{\rho}\,d\eta\,dt.$$
Observe that
$$\int_0^\infty\mathbbm{1}_{[\frac{1}{2},1]}\big(\xi_\rho(t,\eta)\big)\frac{d\rho}{\rho}=\int_0^\infty\mathbbm{1}_{[\frac{1}{2},1]}(s)\frac{ds}{s}=\int_{1/2}^1\frac{ds}{s}=\log 2,$$
and therefore we conclude that
$$Y(R)\leq \log 2\int_0^\infty\int_{\mathbb{H}^n}(u(t,\eta))^p\,d\eta\,dt\leq C,\quad\hbox{for all}\,\,R> 0,$$
where we have used estimate \eqref{3.2} and the fact that $\displaystyle \inf_{t\geq 0}k(t)>0$. On the other hand,
$$Y'(R)=-R^{-1}\int_0^\infty\int_{\mathbb{H}^n}(u(t,\eta))^p\varphi_R^*(t,\eta)\,d\eta\,dt,\qquad\hbox{for all}\,\,R> 0,$$
which yields, using \eqref{testfuntionmethod}, that
$$(M_\infty)^p\lesssim -R^{\frac{Q}{2}(p-1)} Y'(R),\qquad\hbox{for all}\,\,R\geq R_1,$$
and then
$$Y'(R) \lesssim -(M_\infty)^p\,R^{-\frac{Q}{2}(p-1)},\qquad\hbox{for all}\,\,R\geq R_1.$$
Integrating on $[\rho,R]$, for any $\rho\geq R_1$, we get
$$Y(R)-Y(\rho) \lesssim -(M_\infty)^p\int_{\rho}^{R}s^{-\frac{Q}{2}(p-1)}\,ds,\qquad\hbox{for all}\,\,R\geq \rho\geq R_1,$$
which implies
$$0\leq Y(R)\lesssim Y(\rho) -(M_\infty)^p\int_{\rho}^{R}s^{-\frac{Q}{2}(p-1)}\,ds,\qquad\hbox{for all}\,\,R\geq \rho\geq R_1.$$
Hence
\begin{equation}\label{4.4}
(M_\infty)^p\int_{\rho}^{R}s^{-\frac{Q}{2}(p-1)}\,ds \lesssim Y(\rho),\qquad\hbox{for all}\,\,R\geq \rho\geq R_1.
\end{equation}
Using the boundedness of $Y(R_1)$ and \eqref{4.4}, we conclude that
$$\int_{R_1}^{R}s^{-\frac{Q}{2}(p-1)}\,ds \lesssim(M_\infty)^{-p} Y(R_1)\lesssim 1,\qquad\hbox{for all}\,\,R\geq R_1.$$
and therefore
$$\int_{R_1}^{\infty}s^{-\frac{Q}{2}(p-1)}\,ds \lesssim 1.$$
However, this leads to a contradiction since the integral on the left-hand side diverges when $p\leq 1+\frac{2}{Q}$. This completes the proof of Theorem \ref{convto0}. \hfill$\square$


\end{document}